\providecommand{\keywords}[1]{\textbf{\textit{Keywords}} #1}
\providecommand{\keywordsbis}[1]{\textbf{\textit{2010 Mathematics Subject Classification}} #1}
\numberwithin{equation}{section}
\theoremstyle{definition}
\newtheorem{theorem}{Theorem}[section]
\newtheorem{lemma}[theorem]{Lemma}
\newtheorem{proposition}[theorem]{Proposition}
\newtheorem{corollary}[theorem]{Corollary}
\newtheorem{remark}[theorem]{Remark}
\theoremstyle{remark}
\newcommand{\K}{{\mathcal K}}
\newcommand{\F}{{\mathcal{F}}}
\newcommand{\FF}{{\mathbf V}}
\newcommand{\R}{{\mathbb R}}
\newcommand{\N}{{\mathbb N}}
\newcommand{\sfe}{{\mathbb S}^{n-1}}
\newcommand{\Sym}{{\rm Sym}}
\renewcommand{\d}{\,\mathrm{d}}
\newcommand{\scdp}{C^{2,+}(\sfe)}
\newcommand{\scdpo}{C^{2,+}_0(\sfe)}
\title{On $p$-Brunn-Minkowski inequalities\\
for intrinsic volumes, with $0\leq p<1$}
\author{Chiara Bianchini\footnote{\textit{E-mail address:} \texttt{chiara.bianchini@unifi.it}}, Andrea Colesanti\footnote{\textit{E-mail address:} \texttt{andrea.colesanti@unifi.it}}, Daniele Pagnini\footnote{\textit{E-mail address:} \texttt{daniele.pagnini@unifi.it}},  Alberto Roncoroni\footnote{\textit{E-mail address:} \texttt{alberto.roncoroni@unifi.it}}}
\affil{\emph{Dipartimento di Matematica e Informatica ``Ulisse Dini''}\\\emph{Universit\`a degli Studi di Firenze, Viale Morgagni 67/A, 50134 Firenze, Italy}}
\date{}
\begin{document}

\maketitle

\begin{abstract}
We prove the validity of the $p$-Brunn-Minkowski inequality for the intrinsic volume $V_k$, $k=2,\dots, n-1$, of convex bodies in $\R^n$, in a neighborhood of the unit ball, for $0\le p<1$. We also prove that this inequality does not hold true on the entire class of convex bodies of $\R^n$, when $p$ is sufficiently close to $0$.

\end{abstract}
	
\bigskip	
	
\keywords{Convex bodies, Brunn-Minkowski inequality, intrinsic volumes}

\medskip

\keywordsbis{52A40, 52A20 (primary); 26D10 (secondary)}


\section{Introduction}
The Brunn-Minkowski inequality is one of the cornerstones of convex geometry, the branch of mathematics which studies the theory of convex bodies; in one of its formulations,  it states that the volume functional $V_n$ is $(\frac 1n)$-concave, that is
\begin{equation}\label{BM}
V_n((1-t)K_0+tK_1)^{1/n}\ge(1-t)V_n(K_0)^{1/n}+tV_n(K_1)^{1/n},\quad\forall\ K_0,K_1\in\K^n,\,\forall\ t\in[0,1],
\end{equation}
and equality holds if and only if $K_0$ and $K_1$ are homothetic, or contained in parallel hyperplanes. 
Here $V_n$ is the \emph{volume}, that is, the Lebesgue measure in $\R^n$, $K_0,K_1$ belong to the set of compact and convex subsets (\emph{convex bodies}) of $\R^n$, denoted by  $\K^n$, and the ``sum'' of sets indicates the \emph{Minkowski linear combination}, that is, the vectorial sum.
We refer the reader to the survey paper \cite{Gar}, and to the monograph \cite{Schneider} for a thorough presentation of the Brunn-Minkowski inequality, its numerous connections to other areas of mathematics, and applications.

\medskip

Inequality \eqref{BM} has a great number of variations and generalizations which consider different kinds of sums and different kinds of shape functionals; here we are interested 
in extending it to the so called \emph{$p$-addition} of convex bodies and to the \emph{intrinsic volumes}, rather than the classical volume.

\medskip

The $p$-sum of convex bodies was introduced by Firey in \cite{Firey} for $p\ge 1$, and offers an extension to the Minkowski sum (which represents the case $p=1$). Its definition is based on the \emph{support function} of a convex body $K$, which is denoted by $h_K\colon\sfe\to\R$ (see Section \ref{Preliminaries} for definitions and preliminary results) and finds its motivation starting from the behaviour of the support function with respect to the Minkowski sum of sets.
More precisely: for every $K,L\in\K^n$ and $\alpha, \beta\ge0$ the following equality holds:
$$
h_{\alpha K+\beta L}=\alpha h_K+\beta h_L. 
$$  
This relation motivates the definition of $p$-addition, for $p\ge1$: for $K,L\in\K^n$, both containing the origin (that is, $K,L\in\K^n_0$), and for $\alpha,\beta\ge 0$, the $p$-combination $\alpha\cdot K+_p\beta\cdot L$, with $p\ge 1$, is defined as the convex body whose support function is given by 
\begin{equation*}
h_{\alpha\cdot K+_p\beta\cdot L}=(\alpha h_K^p+\beta h_L^p)^{1/p}. 
\end{equation*}

This definition is well posed since $(\alpha h_K^p+\beta h_L^p)^{1/p}$ is a (non-negative) support function, by the condition $p\ge1$. 
The $p$-addition is at the core of the branch of convex geometry currently known as $L_p$-Brunn-Minkowski theory (see \cite[Chapter 9]{Schneider}), which received a major impulse by the works of Lutwak (see for instance \cite{Lutwak1, Lutwak2}).

A recent breakthrough in this context is due to the works \cite{BLYZ, BLYZ-1} by B\"or\"oczky, Lutwak, Yang and Zhang, where the authors begin the analysis of the range $p<1$, focusing on the case $p=0$. In particular, in \cite{BLYZ} they establish the following form of the Brunn-Minkowski inequality for the case $p=0$, called the log-Brunn-Minkowski inequality, which we state in Theorem \ref{teoBM0}. Given $K_0$ and $K_1$ in $\K^n_0$, and $t\in[0,1]$, consider the function $h_t\colon\sfe\to\R^n$ defined by
$$
h_t:=h_{K_0}^{1-t}h_{K_1}^t.
$$
Then define the convex body $(1-t)\cdot K_0+_0 t\cdot K_1$ as the \emph{Aleksandrov body}, or Wulff shape, of $h_t$; that is:
\begin{equation}\label{def0somma}
(1-t)\cdot K_0+_0 t\cdot K_1:=\{x\in\R^n\colon (x,y)\le h_t(y)\;\forall\ y\in\sfe\},
\end{equation}
where $ (\cdot,\cdot) $ denotes the standard scalar product of $ \R^n $. 

\begin{theorem}[B\"or\"oczky, Lutwak, Yang, Zhang] \label{teoBM0} For every $K_0, K_1\in\K^2_0$, origin symmetric, and for every $t\in[0,1]$:
$$
V_2((1-t)\cdot K_0+_0 t\cdot K_1)\ge V_2(K_0)^{1-t} V_2(K_1)^t.
$$
Equality holds if and only if $K_0$ and $K_1$ are dilates of each other, or they are parallelotopes. 
\end{theorem}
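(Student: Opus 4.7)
My plan is to reduce Theorem \ref{teoBM0} to a finite-dimensional log-concavity statement for origin-symmetric polygons with a fixed set of edge normals, and then to verify it.

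First I would approximate $K_0, K_1$ in the Hausdorff metric by origin-symmetric convex polygons $P_0, P_1$ whose edges share a common set of outer unit normals $\pm u_1, \dots, \pm u_m \in \mathbb{S}^1$. Both sides of the inequality are continuous under Hausdorff convergence of $K_0, K_1$: the support functions converge uniformly, the area is continuous, and the Aleksandrov body of a continuously varying positive function on $\mathbb{S}^1$ varies continuously. Hence it suffices to treat the polygonal case. A key simplification occurs here: for $P_0, P_1$ with these prescribed normals, the Aleksandrov body of $h_t = h_{P_0}^{1-t} h_{P_1}^t$ is exactly the polygon determined by the finite constraints $(x, u_i) \leq h_t(u_i)$, since for such polygons the constraint defining \eqref{def0somma} at directions $u$ different from the $u_i$ is automatically inactive.

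Next, parameterising origin-symmetric polygons with these $2m$ fixed normals by their support-function vector $h = (h_1, \dots, h_m) \in \R^m_{>0}$, the area becomes a smooth function $A(h)$, and the log-Brunn-Minkowski inequality is equivalent to the concavity on $\R^m$ of
$$F(s) := \log A(e^{s_1}, \dots, e^{s_m}).$$
The area of such a polygon decomposes into a sum of triangular contributions from consecutive edges, each involving two adjacent $h_i$'s and the fixed angle between the corresponding $u_i$'s, which makes $A$ and $\Hess F$ explicitly computable. The problem thus reduces to showing that $\Hess F$ is negative semidefinite.

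The main obstacle is precisely this Hessian inequality: unlike the classical Brunn-Minkowski inequality \eqref{BM}, which only gives concavity of $A^{1/2}$ in the linear coordinates $h$, the log-version requires concavity in the logarithmic coordinates $s$, a strictly stronger property that is known to fail for general (non origin-symmetric) bodies or in higher dimensions. I would attack it either by induction on $m$, with base case $m = 2$ --- parallelograms, where one has $A = 4 h_1 h_2 / |\sin\theta|$ so that $F$ is affine in $s$, giving equality --- coupled with a deformation argument that merges two consecutive normals while controlling the Hessian; or via Fourier analysis on $\mathbb{S}^1$, exploiting that origin-symmetry restricts the support functions to even functions. For the equality classification, the two degeneracy directions of $\Hess F$ should correspond exactly to dilation ($s_1 = \cdots = s_m$ up to an additive constant) and to the parallelogram case ($m = 2$), matching the announced equality classes.
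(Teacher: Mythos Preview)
First, note that the present paper does not itself prove Theorem~\ref{teoBM0}; it is quoted from \cite{BLYZ} as background, so there is no in-paper argument to compare against. Your polygonal reduction is nonetheless in the spirit of the original proof of B\"or\"oczky, Lutwak, Yang and Zhang.

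There are two genuine gaps in your plan. The first is minor but should be flagged: the claim that for polygons $P_0,P_1$ with common normals $\pm u_1,\dots,\pm u_m$ the Aleksandrov body of $h_t=h_{P_0}^{1-t}h_{P_1}^t$ coincides with the polygon $Q_t$ cut out by the finitely many constraints $(x,u_i)\le h_t(u_i)$ is \emph{not} automatic. One always has $K[h_t]\subseteq Q_t$; the reverse inclusion amounts to $h_{Q_t}\le h_t$ on each arc between consecutive $u_i,u_{i+1}$, and after expressing the three support functions on that arc as linear functionals of the corresponding vertices it reduces to the two-term H\"older inequality
\[
\lambda\, a_0^{1-t}a_1^{t}+\mu\, b_0^{1-t}b_1^{t}\le(\lambda a_0+\mu b_0)^{1-t}(\lambda a_1+\mu b_1)^{t},\qquad \lambda,\mu>0.
\]
Easy, but it must be said.

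The second gap is the substantive one. After your reduction, the entire content of the theorem is the assertion that $s\mapsto\log A(e^{s_1},\dots,e^{s_m})$ is concave on $\R^m$, where $A$ is the explicit quadratic area form of an origin-symmetric $2m$-gon with fixed normals. You propose either induction on $m$ by ``merging two consecutive normals while controlling the Hessian'', or ``Fourier analysis on $\mathbb{S}^1$'', but neither is made concrete, and I do not see a mechanism by which the merging deformation would preserve negative semidefiniteness of the Hessian. This inequality is exactly where the planar, origin-symmetric hypotheses do real work (it is known to fail without them), and it is where \cite{BLYZ} invests its effort---via an equivalent log-Minkowski inequality for cone-volume measures rather than a direct Hessian computation. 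Finally, your treatment of equality is incomplete: polygonal approximation does not by itself transfer the equality characterisation back to general $K_0,K_1$, and your phrase ``two degeneracy directions'' conflates two distinct phenomena---the single dilation direction $s_1=\cdots=s_m$, which is present for every $m$, and the identically vanishing Hessian in the special case $m=2$.
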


In \cite{BLYZ} the authors conjectured the same result to be valid in arbitrary dimension; this is the well known log-Brunn-Minkowski inequality conjecture.

\noindent{\bf Conjecture (Log-Brunn-Minkowski inequality - B\"or\"oczky, Lutwak, Yang, Zhang).} For every $K_0, K_1\in\K^n_0$, origin symmetric, and for every $t\in[0,1]$:
\begin{equation}\label{BM0}
V_n((1-t)\cdot K_0+_0 t\cdot K_1)\ge V_n(K_0)^{1-t} V_n(K_1)^t.
\end{equation}

The idea used in (\ref{def0somma}) to define the $0$-sum has been extended to define the $L_p$ convex linear combination of $K_0, K_1\in\K^n_0$, for $p\in(0,1)$. Given $t\in[0,1]$, we set
$$
(1-t)\cdot K_0+_p\ t\cdot K_1:=\{x\in\R^n\colon (x,y)\le \big((1-t)h_{K_0}^p(y)+t h_{K_1}^p(y)\big)^{1/p},\;\forall\ y\in\sfe\}.
$$ 
Note that, by standard properties of $p$-means, for every $p\ge 0$ it holds:
\begin{equation}\label{inclusion}
(1-t)\cdot K_0+_0 t\cdot K_1\subseteq
(1-t)\cdot K_0+_p\ t\cdot K_1,\quad\forall\ K_0,K_1\in\K^n_0,\;\forall\ t\in[0,1],
\end{equation}
(see \eqref{inclusion_bis} below). 
Hence, by the previous inclusion 
and by a standard argument based on the homogeneity of the volume, inequality \eqref{BM0} would imply:
\begin{equation}\label{BMp}
V_n((1-t)\cdot K_0+_p t\cdot K_1)^{p/n}\ge (1-t)V_n(K_0)^{p/n}+t V_n(K_1)^{p/n},
\end{equation}
for every $p\ge0$, that is a Brunn-Minkowski type inequality for the $p$-sum, for $p\ge 0$.

The conjectures about the validity of \eqref{BM0} and \eqref{BMp} originated an intense activity in recent years, and much progress has been made in this area (see \cite{Ma,Xi-Leng,Saroglou,Rotem,CLM,CL,Kolesnikov-Milman,Chen-Huang-Li-Liu,Putterman,Boroczky-Kalantzopoulos,Hosle-Kolesnikov-Livshyts,Kolesnikov-Livshyts,Milman,Boroczky-De}).

\medskip

As already mentioned, inequality \eqref{BM} has been generalized in many directions. It became the prototype for many similar inequalities, which bear the name of ``Brunn-Minkowski type inequalities'', where the volume functional is replaced by other functionals. Among them, we mention those verified by intrinsic volumes as functionals defined on $\K^n$, with respect to the standard Minkowski addition. Indeed, for every $k\in\{0,1,\dots,n\}$, the following inequality holds:
\begin{equation}\label{BMk}
V_k((1-t)K_0+t K_1)^{1/k}\ge (1-t)V_k(K_0)^{1/k}+t V_k(K_1)^{1/k},\quad\forall\ K_0,K_1\in\K^n,\ \forall\ t\in[0,1]
\end{equation}
(see \cite[Theorem 7.4.5]{Schneider}), where $V_k$ is the {$k$-th intrinsic volume}. In particular, when $k=1$ equality holds in the previous inequality for every $K_0, K_1$ and $t$, while for $k= n$ inequality (\ref{BMk}) is the classical Brunn-Minkowski inequality (\ref{BM}). Note that \eqref{BMk} implies a corresponding inequality with respect to the $p$-addition, for every $p\ge1$, in $\K^n_0$, due to the monotonicity of intrinsic volumes. 

\medskip

The question that we consider in this paper is whether intrinsic volumes verify a Brunn-Minkowski inequality  with respect to the $p$-addition, for $p\in[0,1)$, in $\K^n_0$. 

\medskip

To begin with, we present the case $k\in\{2,\dots,n-1\}$ (note that, for $k=0$, $V_0$ is constant, $k=n$ is the case of the volume, and the case $k=1$ will be described separately). We prove two types of results, one in the affirmative and one in the negative direction. Our first two results state the validity of the $p$-Brunn-Minkowski inequality for intrinsic volumes, in a suitable neighborhood of the unit ball $B_n$ of $\R^n$, for every $p\in[0,1)$. We start with the case $p=0$. We denote by  $\K^n_{0,s}$ the family of origin symmetric convex bodies.

\begin{theorem}\label{princ due}
Let $k\in\{2,\dots,n\}$. There exists $\eta>0$ such that if $K\in\K^n_{0,s}$ is of class $C^{2,+}$ and verifies
$$
\|1-h_K\|_{C^2(\sfe)}\le \eta,
$$
then 
\begin{equation}\label{BM-0}
V_k((1-t)\cdot B_n+_0 tK)\ge V_k(B_n)^{1-t}\ V_k(K)^{t}.
\end{equation}
Moreover, equality holds in \eqref{BM-0}, if and only if $K$ is a ball centered at the origin.
\end{theorem}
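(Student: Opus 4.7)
My plan is to prove \eqref{BM-0} by showing that the function $G(t):=\log V_k\bigl((1-t)\cdot B_n \pz t\cdot K\bigr)$ is concave on $[0,1]$, which, since $G(0)=\log V_k(B_n)$ and $G(1)=\log V_k(K)$, immediately gives the inequality. The approach is the linearization/spectral strategy developed for the volume by Kolesnikov--Milman (and by Chen--Huang--Li--Liu), adapted to the intermediate intrinsic volumes. Writing $\psi:=\log h_K$, the $C^2$-closeness hypothesis guarantees that $\psi$ is a small, and by origin-symmetry \emph{even}, function on $\sfe$. Moreover, the log-interpolant $h_t:=h_K^t=e^{t\psi}$ has $\Hess h_t+h_t I$ positive definite for every $t\in[0,1]$ when $\eta$ is small enough, so $h_t$ is the support function of its own Wulff shape $K_t:=(1-t)\cdot B_n \pz t\cdot K$ itself, and the whole path stays in $\knos\cap\scdpo$.

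For the second variation, I would use the classical first-variation formula
\[
\frac{d}{dt} V_k(K_t) = k\int_{\sfe} \dot h_t \, dS_{k-1}(K_t,\cdot), \qquad \dot h_t=\psi\, h_t,
\]
where $S_{k-1}(K,\cdot)$ is the $(k-1)$-th area measure of $K$. Differentiating once more, via the bilinearity of the mixed area measures in the Hessian of the support function, one obtains an expression of the form
\[
G''(t)=\frac{\mathcal Q_{K_t}[\psi]}{V_k(K_t)} - \left(\frac{k\int_{\sfe}\psi\,h_t\,dS_{k-1}(K_t,\cdot)}{V_k(K_t)}\right)^{\!2},
\]
where $\mathcal Q_{K_t}[\psi]$ is a quadratic functional in $\psi$ featuring $|\nabla_{\sfe}\psi|^2$ and $\psi^2$ integrated against mixed area measures of $K_t$.

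At $K=B_n$, where $h_0\equiv 1$ and $dS_{k-1}(B_n,\cdot)=\binom{n-1}{k-1}\,\d\mh$, integration by parts on $\sfe$ reduces the condition $G''(0)\le 0$ to a Poincar\'e-type estimate of the shape
\[
\int_{\sfe} |\nabla_{\sfe}\psi|^2\,\d\mh - \alpha_{n,k}\int_{\sfe} \psi^2\,\d\mh + \beta_{n,k}\left(\int_{\sfe}\psi\,\d\mh\right)^{\!2}\ge 0,
\]
for explicit constants $\alpha_{n,k},\beta_{n,k}>0$ depending only on $n$ and $k$. Decomposing $\psi$ in spherical harmonics and using that, by origin-symmetry, only components of even degree $m\in\{0,2,4,\ldots\}$ appear, the non-constant part of $\psi$ satisfies $\int_{\sfe}|\nabla_{\sfe}\psi|^2\d\mh\ge 2n\int_{\sfe}\psi^2\d\mh$, which is enough to absorb $\alpha_{n,k}$ and to close the inequality strictly unless $\psi$ is constant. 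For $\eta$ small, $K_t$ remains $C^{2,+}$-close to $B_n$ uniformly in $t$, so this spectral estimate is stable under perturbation and gives $G''(t)\le 0$ throughout $[0,1]$. Equality in \eqref{BM-0} then forces $G$ to be affine on $[0,1]$, hence $G''\equiv 0$, which by the strict spectral gap in the even sector forces $\psi$ constant, i.e.\ $K$ a ball centered at the origin.

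The main obstacle, in my view, is the computation and control of $\mathcal Q_{K_t}[\psi]$ for general $k$. For $k=n$ the relevant quadratic form has been analyzed in detail in the literature, but for $k<n$ it arises from elementary symmetric functions of the principal radii of curvature of $B_n$, and one must identify the correct operator on $\sfe$, compute the explicit constants $\alpha_{n,k},\beta_{n,k}$, and verify that the evenness eigenvalue $2n$ dominates $\alpha_{n,k}$ uniformly in the range $k\in\{2,\ldots,n\}$.
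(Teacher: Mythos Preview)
Your proposal is correct and follows essentially the same route as the paper: set $\psi=\log h_K$, show that $t\mapsto \log V_k(K_t)$ is concave by computing the second variation at $B_n$, invoke the even Poincar\'e inequality on $\sfe$ (eigenvalue $2n$) to obtain a strict gap --- the paper finds $\alpha_{n,k}=\frac{(n-1)k}{k-1}<2n$, confirming the dominance you anticipated --- and then extend to all $t\in[0,1]$ by perturbation. The only point where the paper is more explicit is this last step: rather than appealing to abstract ``stability of the spectral estimate'', it bounds the third derivative of $s\mapsto V_k(K_s)$ by $C\|\psi\|_{C^2(\sfe)}\bigl(\|\psi\|_{L^2}^2+\|\nabla\psi\|_{L^2}^2\bigr)$ (Lemma~\ref{key_lemma}), which is exactly the quantitative control needed to absorb the $O(\eta)$ perturbation into the gap.
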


The proof of this theorem is in the same spirit of analogous (and, in fact, stronger) results concerning the volume, proved in \cite{CL,CLM,Kolesnikov-Milman}. The argument can be heuristically described as follows: \eqref{BM-0} is equivalent to the concavity of $\log(V_k)$, with respect to the $0$-addition. We compute the second variation of $\log(V_k)$, and we prove that it is negative definite at the unit ball. Then, by a continuity argument, we deduce that such second variation continues to be negative definite in a neighborhood of $B_n$. As in the case of the volume, determining the sign of the second variation amounts to analysing the spectrum of a second order elliptic operator on $\sfe$. As it is pointed out in \cite{Kolesnikov-Milman}, this method dates back to the proof of the standard Brunn-Minkowski inequality for the volume, given by Hilbert (see also \cite{Bonnesen-Fenchel}). 

Using \eqref{inclusion} and an argument based on homogeneity, from the previous result we deduce its corresponding version for $p>0$.

\begin{theorem}\label{princ_uno_nuovo} Let $k\in\{2,\dots,n\}$ and $p\in(0,1)$. There exists $\eta>0$ such that if $K\in\K^n_{0,s}$ is of class $C^{2,+}$ and verifies
$$
\|1-h_K\|_{C^2(\sfe)}\le \eta,
$$
then
\begin{equation}\label{BM-q}
V_k((1-t)\cdot B_n+_p tK)^{p/k}\ge (1-t)V_k(B_n)^{p/k}+tV_k(K)^{p/k}.
\end{equation}
Moreover, equality holds in \eqref{BM-q}, if and only if $K$ is a ball centered at the origin.
\end{theorem}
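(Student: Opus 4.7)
\medskip

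\noindent\textbf{Proof plan.} The strategy is precisely the one anticipated by the authors just before the statement: combine the log-Brunn-Minkowski inequality of Theorem~\ref{princ due} with the inclusion \eqref{inclusion} and the $k$-homogeneity of $V_k$. The key trick is to first rescale $K$ so that its $k$-th intrinsic volume matches $V_k(B_n)$, which is exactly what aligns the log-type lower bound coming from Theorem~\ref{princ due} with the desired arithmetic-power-mean lower bound in \eqref{BM-q}.

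More concretely, I would proceed as follows. Set $\mu:=(V_k(B_n)/V_k(K))^{1/k}$, so that $V_k(\mu K)=V_k(B_n)$. Since $V_k$ is continuous in the $C^2(\sfe)$ topology of support functions, one has $\mu\to 1$ as $\|1-h_K\|_{C^2(\sfe)}\to 0$; combined with the elementary bound $\|1-h_{\mu K}\|_{C^2(\sfe)} \le |1-\mu| + \mu\|1-h_K\|_{C^2(\sfe)}$, this lets me choose the $\eta$ of the present statement small enough that $\mu K$ still satisfies the hypothesis of Theorem~\ref{princ due}. Given a target $t\in(0,1)$, I would then apply Theorem~\ref{princ due} to the pair $(B_n,\mu K)$ at the parameter $s:=t/(\mu^p(1-t)+t)\in(0,1)$ and chain it with \eqref{inclusion} to obtain
\[
V_k\bigl((1-s)\cdot B_n +_p s\cdot \mu K\bigr) \;\ge\; V_k\bigl((1-s)\cdot B_n +_0 s\cdot \mu K\bigr) \;\ge\; V_k(B_n)^{1-s} V_k(\mu K)^s = V_k(B_n).
\]

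The choice of $s$ is engineered so that the $p$-sum on the left rescales cleanly to the one of interest: a short support-function computation, using that $h_{\mu K}=\mu h_K$ and the definition of the $p$-sum, gives
\[
(1-s)\cdot B_n +_p s\cdot \mu K \;=\; \Bigl(\tfrac{\mu^p}{\mu^p(1-t)+t}\Bigr)^{1/p}\cdot \bigl((1-t)\cdot B_n +_p t\cdot K\bigr).
\]
Taking $V_k$ of both sides, invoking its $k$-homogeneity, dividing by the scaling factor, raising the result to the power $p/k$, and simplifying via the identity $\mu^{-p}V_k(B_n)^{p/k}=V_k(K)^{p/k}$ produces exactly \eqref{BM-q}. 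For the equality case, a chain of equalities in the above argument forces both equality in Theorem~\ref{princ due} applied to $(B_n,\mu K)$ and equality in the inclusion \eqref{inclusion}, each of which demands that $\mu K$ be a ball centred at the origin; since $\mu>0$, this is equivalent to $K$ being such a ball, and the converse follows from the fact that if $K=rB_n$ then both sides of \eqref{BM-q} reduce to $V_k(B_n)^{p/k}((1-t)+tr^p)$. The only place where I expect any real work is the very first step, namely the quantitative control $|1-\mu|\le c\,\|1-h_K\|_{C^2(\sfe)}$ that keeps $\mu K$ inside the range of validity of Theorem~\ref{princ due}; this is handled by the standard continuity of the intrinsic volumes with respect to the $C^2$-topology of support functions, after which the rest of the argument is a purely algebraic manipulation of scaling factors in the definition of the $p$-sum.
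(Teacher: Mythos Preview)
Your proof is correct and follows essentially the same route as the paper: rescale so that the two bodies have equal $k$-th intrinsic volume, apply Theorem~\ref{princ due}, and use the inclusion \eqref{inclusion} together with $k$-homogeneity (your parameter $s$ is exactly the paper's $\tilde t$). The one minor difference is that the paper applies Theorem~\ref{princ due} directly to $(B_n,K)$ and then invokes the scaling invariance of the log-Brunn--Minkowski inequality (Remark~\ref{rmkBM0-scaling}) to pass to the rescaled pair, which sidesteps the continuity argument you use to push $\mu K$ back into the range of Theorem~\ref{princ due}.
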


By homogeneity the same results could be stated replacing $B_n$ by the ball of radius $R>0$ centered at the origin, for an arbitrary $R$. 

In the case of the volume, $k=n$, and for $n\ge3$, Theorem \ref{princ due} is contained in \cite{CL}. Concerning Theorem \ref{princ_uno_nuovo}, we remark that when $p<1$ is sufficiently close to $1$, the validity of the $p$-Brunn-Minkowski inequality for the volume has been proved in the entire class of centrally symmetric convex bodies, with the contribution of results by Kolesnikov and Milman \cite{Kolesnikov-Milman} and Chen, Huang, Li, Liu \cite{Chen-Huang-Li-Liu} (see also \cite{Putterman}).

\medskip

We also show by counterexamples that for $p$ sufficiently close to $0$, the $p$-Brunn-Minkowski inequality for $V_k$ does not hold in $\K^n$, for any $k\in\{2,\dots,n-1\}$.

\begin{theorem}\label{BM-fails}
For every $n\ge 3$, $k\in\{2,\dots,n-1\}$, there exists $\bar p\in(0,1)$ such that for every $p\in(0,\bar p)$ the $p$-Brunn-Minkowski inequality for $V_k$ does not hold in $\K^n_{0,s}$. That is, there exist $K_0, K_1\in\K^n_{0,s}$ such that 
	\begin{equation}\label{non-BM}
	V_k(\text{\scriptsize{$\frac 12$}} \cdot K_0+_p \text{\scriptsize{$\frac 12$}} \cdot K_1)^{\frac pk} < \frac 12 V_k(K_0)^{\frac pk}+\frac 12V_k(K_1)^{\frac pk},
	\end{equation}
	if $0<p<\bar{p}$.
\end{theorem}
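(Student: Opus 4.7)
The strategy is to exhibit a pair of elongated, orthogonally oriented parallelepipeds for which \eqref{non-BM} fails, first in the limit $p\to 0^+$, and then, by a continuity argument in $p$, for all sufficiently small $p>0$. For a parameter $L>1$ to be chosen large, I take
$$K_0=[-L,L]\times[-1,1]^{n-1},\qquad K_1=[-1,1]\times[-L,L]\times[-1,1]^{n-2},$$
both of which lie in $\K^n_{0,s}$, with support functions $h_{K_0}(y)=L|y_1|+|y_2|+\cdots+|y_n|$ and its coordinate-swapped analogue $h_{K_1}(y)=|y_1|+L|y_2|+|y_3|+\cdots+|y_n|$.

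The first step is to identify $M:=\tfrac12\cdot K_0+_0\tfrac12\cdot K_1$ explicitly. Setting $h:=h_{K_0}^{1/2}h_{K_1}^{1/2}$, a direct computation shows
$$h(y)^2-\bigl(L^{1/2}(|y_1|+|y_2|)+|y_3|+\cdots+|y_n|\bigr)^2=(L-1)^2|y_1y_2|+(L^{1/2}-1)^2(|y_1|+|y_2|)(|y_3|+\cdots+|y_n|)\ge 0,$$
so $h$ dominates the support function of the box $[-L^{1/2},L^{1/2}]^2\times[-1,1]^{n-2}$; on the other hand, $h$ agrees with it at every $\pm e_i$. It follows that the Aleksandrov body defining the $0$-sum is exactly $M=[-L^{1/2},L^{1/2}]^2\times[-1,1]^{n-2}$.

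Using the formula that expresses the $k$-th intrinsic volume of an axis-parallel box as the $k$-th elementary symmetric polynomial in its side lengths, one computes
$$V_k(K_0)=V_k(K_1)\sim 2^k L\binom{n-1}{k-1},\qquad V_k(M)\sim 2^k L\binom{n-2}{k-2}\quad\text{as }L\to\infty.$$
Pascal's identity $\binom{n-1}{k-1}=\binom{n-2}{k-1}+\binom{n-2}{k-2}$ combined with $\binom{n-2}{k-1}\ge 1$ (which holds exactly for $k\in\{2,\dots,n-1\}$) gives $\binom{n-2}{k-2}<\binom{n-1}{k-1}$, whence
$$\frac{V_k(M)}{V_k(K_0)^{1/2}V_k(K_1)^{1/2}}\longrightarrow\frac{\binom{n-2}{k-2}}{\binom{n-1}{k-1}}<1.$$
Fixing $L$ large enough, the log-Brunn-Minkowski inequality for $V_k$ at $t=1/2$ fails strictly on the pair $(K_0,K_1)$.

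Finally, to pass from $p=0$ to small $p>0$: since $h_{K_0},h_{K_1}\ge 1$ on $\sfe$, the $p$-mean $(\tfrac12 h_{K_0}^p+\tfrac12 h_{K_1}^p)^{1/p}$ converges uniformly to $h$ as $p\to 0^+$, so $\tfrac12\cdot K_0+_p\tfrac12\cdot K_1\to M$ in the Hausdorff metric, and by continuity of $V_k$ also $V_k(\tfrac12\cdot K_0+_p\tfrac12\cdot K_1)\to V_k(M)$. Expanding $V_k(\cdot)^{p/k}=1+(p/k)\log V_k(\cdot)+o(p)$, the difference between the two sides of \eqref{non-BM} becomes
$$\frac{p}{k}\Bigl[\log V_k(M)-\tfrac12\log V_k(K_0)-\tfrac12\log V_k(K_1)\Bigr]+o(p),$$
whose bracketed factor is a fixed negative constant; the difference is therefore strictly negative for all $p\in(0,\bar p)$ and some $\bar p>0$. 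The main obstacle is identifying the Aleksandrov body of $h$ exactly, but this is settled by the elementary algebraic identity above together with the coincidence $h(\pm e_i)=h_M(\pm e_i)$; the remainder is a routine asymptotic and continuity calculation.
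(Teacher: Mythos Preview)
Your argument is correct, and it follows a genuinely different route from the paper's. The paper works directly with the $p$-sum: it takes $K_0$ and $K_1$ to be \emph{degenerate} $k$-dimensional unit cubes embedded in complementary coordinate subspaces, bounds $\tfrac12\cdot K_0+_p\tfrac12\cdot K_1$ from above by the box $\{|x_i|\le(\tfrac12 h_{K_0}^p+\tfrac12 h_{K_1}^p)^{1/p}(e_i)\}$, computes $V_k$ of that box via elementary symmetric functions, and extracts an explicit threshold $\bar p_k$ (given by closed formulas depending on whether $k\le n/2$, $n/2<k\le 2n/3$, or $k>2n/3$). You instead use full-dimensional elongated boxes, identify the $0$-sum \emph{exactly} as a box via the algebraic identity and the coincidence at $\pm e_i$, show failure of the log-Brunn--Minkowski inequality by an $L\to\infty$ asymptotic, and then pass to small $p$ by continuity of the Aleksandrov body and of $V_k$. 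The paper's approach yields quantitative information on $\bar p$ (and its dependence on $n,k$), which your continuity argument does not; on the other hand, your bodies lie honestly in $\K^n_{0,s}$ as defined (origin in the interior), whereas the paper's degenerate cubes do not, and your exact identification of the $0$-sum is cleaner than an enclosing-box estimate. Both proofs ultimately rest on the same combinatorial fact that $\binom{n-2}{k-2}<\binom{n-1}{k-1}$ for $2\le k\le n-1$, which is precisely where the restriction $k\ne 1,n$ enters.
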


Indeed we built some counterexamples by considering $k$-dimensional cubes, with faces parallel to coordinate hyperplanes, embedded in $\R^n$ is such a way that the dimension of their intersection is minimized. The construction shows how the value $\bar p$ depends on $n$ and $k$. 

\medskip

The analysis of the case $k=1$ yields a reverse Brunn-Minkowski inequality: this is a direct consequence of the linearity of $V_1$ with respect to the Minkowski addition.

\begin{theorem}\label{princ tre} Let $n\ge 3$, $p\in [0,1)$, $t\in[0,1]$. For every $K_0, K_1\in\K^n_{0}$ it holds
\begin{equation}\label{BM-1}
V_1((1-t)\cdot K_0+_p t\cdot K_1)^{p}\le (1-t)V_1(K_0)^{p}+tV_1(K_1)^{p}.
\end{equation}
Moreover, equality holds if and only if either one of the two bodies $K_0$ and $K_1$ coincides with $\{0\}$, or they coincide up to a dilation. 
\end{theorem}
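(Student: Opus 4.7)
The plan is to leverage the well-known integral representation of the first intrinsic volume,
\[ V_1(K) = c_n \int_{\sfe} h_K(u)\, d\mh(u), \]
for a positive constant $c_n$ depending only on $n$. Two consequences suffice: $V_1$ is $1$-homogeneous, and monotone with respect to pointwise ordering of support functions.

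Fix $p \in (0,1)$ and set $L := (1-t)\cdot K_0 +_p t \cdot K_1$. By the definition of $L$ as the Aleksandrov body of $h_t(u) := ((1-t) h_{K_0}(u)^p + t h_{K_1}(u)^p)^{1/p}$, one has $h_L \le h_t$ pointwise on $\sfe$, so monotonicity yields $V_1(L) \le c_n \int_{\sfe} h_t \, d\mh$. Assuming for the moment that $V_1(K_0), V_1(K_1) > 0$, I would introduce the rescaled support functions $\tilde h_i := h_{K_i}/V_1(K_i)$ and the numbers
\[ M := \bigl((1-t) V_1(K_0)^p + t V_1(K_1)^p\bigr)^{1/p}, \qquad \alpha := \frac{(1-t) V_1(K_0)^p}{M^p}, \qquad \beta := \frac{t V_1(K_1)^p}{M^p}, \]
so that $\alpha + \beta = 1$ and $c_n \int_{\sfe} \tilde h_i \, d\mh = 1$ for $i=0,1$. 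A direct computation rewrites $h_t = M\bigl(\alpha \tilde h_0^p + \beta \tilde h_1^p\bigr)^{1/p}$, and the pointwise power-mean inequality
\[ \bigl(\alpha a^p + \beta b^p\bigr)^{1/p} \le \alpha a + \beta b, \]
valid for $p \in (0,1)$, $a,b \ge 0$ and with equality iff $a=b$, gives $h_t \le M(\alpha \tilde h_0 + \beta \tilde h_1)$. Integrating and using the normalization yields $c_n \int_{\sfe} h_t \, d\mh \le M(\alpha + \beta) = M$, hence $V_1(L) \le M$; raising to the $p$-th power is exactly \eqref{BM-1}.

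Two edge cases remain. If $V_1(K_0) = 0$, that is, $K_0 = \{0\}$, then $h_{K_0} \equiv 0$, and a direct check gives $L = t^{1/p} K_1$, so both sides of \eqref{BM-1} equal $t V_1(K_1)^p$; the case $K_1=\{0\}$ is symmetric. The case $p=0$ is either the tautology $1 \le 1$ in the literal reading or, under the logarithmic interpretation, reads $V_1(L) \le V_1(K_0)^{1-t} V_1(K_1)^t$; the latter follows from the same scheme with H\"older's inequality replacing the power-mean bound, since $h_t = h_{K_0}^{1-t} h_{K_1}^t$ and $\int_{\sfe} h_{K_0}^{1-t} h_{K_1}^t \, d\mh \le \bigl(\int h_{K_0}\, d\mh\bigr)^{1-t} \bigl(\int h_{K_1}\, d\mh\bigr)^t$. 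Finally, the equality characterization is read off the rescaling: equality in \eqref{BM-1} forces equality in the pointwise power-mean inequality at $\mh$-almost every $u \in \sfe$, which for $p < 1$ means $\tilde h_0 \equiv \tilde h_1$ on $\sfe$, that is, $K_0$ and $K_1$ are dilates of each other.

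There is no real obstacle here; the only technical subtlety is identifying the correct normalization $(\alpha, \beta, M)$ so that the desired $p$-th power inequality descends from a purely pointwise bound after integration against the $(n-1)$-dimensional Hausdorff measure on the sphere.
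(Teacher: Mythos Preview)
Your argument is correct and follows essentially the same route as the paper: both proofs combine the bound $h_L\le h_t$ (equivalently, the inclusion \eqref{inclusion_bis}) with the linearity of $V_1$, the pointwise power-mean versus arithmetic-mean inequality, and the same normalization $\tilde h_i=h_{K_i}/V_1(K_i)$ together with the weights $\alpha,\beta$ (the paper's $\tilde t$). The only cosmetic difference is the order of operations---you rescale first and then apply the power-mean inequality, while the paper applies the inclusion first and then rescales---but the ingredients and the equality analysis coincide.
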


\medskip

As it is well known, Brunn-Minkowski type inequalities (and in particular equality conditions), are often decisive for uniqueness in the corresponding Minkowski problem. The relevant problem for intrinsic volumes is in fact the so-called Christoffel-Minkowski problem, which asks to determine a convex body when one of its area measures is prescribed (see \cite[Section 8.4]{Schneider}). As an application of Theorems \ref{princ due} and \ref{princ_uno_nuovo}, we find the following local uniqueness result for the solution of the $L_p$ version of the Christoffel-Minkowski problem, $0\le p<1$. 

\begin{theorem}\label{teo uniqueness}
Let $p\in[0,1)$ and $k\in\{2,\dots,n-2\}$. There exists $\eta>0$ such that if $K\in\K^n_{0,s}$ is of class $C^{2,+}$ and 
$$
\|1-h_K\|_{C^2(\sfe)}\le \eta,
$$
then the condition
\begin{equation}\label{eq uniqueness}
h_K^{1-p}\d S_{k-1}(K,\cdot)=\d S_{k-1}(B_n,\cdot),
\end{equation}
implies that $K=B_n$ (here $S_{k-1}$ denotes the \emph{area measure} of order $(k-1)$ of $K$). 
\end{theorem}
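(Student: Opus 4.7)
The plan is to combine the $p$-Brunn--Minkowski inequalities of Theorems~\ref{princ due} and~\ref{princ_uno_nuovo} with two Minkowski-type integral identities obtained by integrating \eqref{eq uniqueness} against well-chosen test functions. Throughout, write $a:=V_k(B_n)$, $b:=V_k(K)$, and let $c_{n,k}>0$ be the constant such that $\int_{\sfe} h_L\,\d S_{k-1}(L,\cdot)=c_{n,k} V_k(L)$ for every $L\in\K^n$ (a consequence of the $k$-homogeneity of $V_k$). First I would integrate \eqref{eq uniqueness} against the test functions $h_K^p$ and $1$ and invoke this identity for both $K$ and $B_n$, obtaining
\begin{equation*}
\int_{\sfe} h_K^p\,\d S_{k-1}(B_n,\cdot)=c_{n,k}\,b,\qquad \int_{\sfe} h_K^{1-p}\,\d S_{k-1}(K,\cdot)=c_{n,k}\,a.
\end{equation*}

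Next I would consider the two families $L_t:=(1-t)\cdot B_n+_p t\cdot K$ and $\widetilde L_t:=(1-t)\cdot K+_p t\cdot B_n$, for $t\in[0,1]$. Since $\|1-h_K\|_{C^2(\sfe)}$ is small, the pseudo-support functions defining the $p$-sum have positive definite spherical Hessian on a right neighborhood of $t=0$, so $L_t$ and $\widetilde L_t$ actually have those pseudo-support functions as support functions; this legitimises the classical first variation formula for $V_k$. Using the identities above, a direct computation then gives
\begin{equation*}
\frac{d}{dt}\bigg|_{t=0^+}V_k(L_t)^{p/k}=a^{\frac{p}{k}-1}(b-a),\qquad \frac{d}{dt}\bigg|_{t=0^+}V_k(\widetilde L_t)^{p/k}=b^{\frac{p}{k}-1}(a-b),
\end{equation*}
with the natural logarithmic variants when $p=0$. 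The $p$-Brunn--Minkowski inequality forces these derivatives to be at least $b^{p/k}-a^{p/k}$ and $a^{p/k}-b^{p/k}$, respectively; a short rearrangement reduces each to $a^{p/k-1}\ge b^{p/k-1}$ and to its reverse. Since $p/k<1$, the map $x\mapsto x^{p/k-1}$ is strictly monotone on $(0,+\infty)$, so this will force $V_k(K)=V_k(B_n)$.

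The last step is a rigidity argument. Knowing $a=b$, the formula above gives $F'(0^+)=0$ for $F(t):=V_k(L_t)^{p/k}$, while $F(0)=F(1)=a^{p/k}$. Concavity of $F$ (Theorem~\ref{princ_uno_nuovo}) then makes $F$ non-increasing on $[0,1]$; combined with the lower bound $F(t)\ge a^{p/k}$ from the Brunn--Minkowski inequality, $F$ must be identically constant. Consequently \eqref{BM-q} (or \eqref{BM-0} when $p=0$) is an equality for every $t\in(0,1)$, and the equality characterization in Theorems~\ref{princ due} and~\ref{princ_uno_nuovo} forces $K$ to be a ball centered at the origin; together with $V_k(K)=V_k(B_n)$, this yields $K=B_n$.

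The hard part will be the rigorous justification of the first variation formula in the $p$-sum setting with $p<1$: by construction, the $p$-sum is an Aleksandrov body, and a priori its support function is only bounded above by the pseudo-support function $((1-t)h_{K_0}^p+th_{K_1}^p)^{1/p}$. The $C^{2,+}$-closeness hypothesis on $K$ is precisely what guarantees that, for $t$ small, this pseudo-support function has positive definite spherical Hessian and hence coincides with the support function of $L_t$ (and of $\widetilde L_t$), which is what makes the computation above rigorous.
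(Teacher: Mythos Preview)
Your proposal is correct and follows essentially the paper's own argument: the paper also compares the first variation of $t\mapsto V_k((1-t)\cdot B_n+_p t\cdot K)^{p/k}$ against the linear interpolant at both endpoints (written there as $f'(0)\ge g'(0)$ and $f'(1)\le g'(1)$, which is equivalent to your two paths $L_t$ and $\widetilde L_t=L_{1-t}$), uses \eqref{eq uniqueness} exactly as you do to deduce $V_k(K)=V_k(B_n)$, and then concludes via concavity together with the equality case in Theorem~\ref{princ_uno_nuovo} (resp.\ Theorem~\ref{princ due} when $p=0$). Your observation that the $C^{2,+}$-closeness forces the pseudo-support function to be the genuine support function of $L_t$ is precisely the content of the paper's Lemma~\ref{non si voleva}.
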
 

\medskip

The corresponding result for $k=n$ is established in \cite{CL}. Note that \eqref{eq uniqueness} can be written as a partial differential equation on $\sfe$:
\begin{equation*}
h(x)^{1-p} S_{k-1}(h_{ij}(x)+h(x)\delta_{ij})=c_{n,k},
\end{equation*}
where $h$ indicates the support function of $ K $, $h_{ij}$, for  $i,j=1,\dots,n-1$, are the second covariant derivatives of $h$ with respect to a local orthonormal frame on $\sfe$, $\delta_{ij}$ are the usual Kronecker symbols, $S_{k-1}(h_{ij}+h\delta_{ij})$ is the elementary symmetric function of order $(k-1)$ of the eigenvalues of $h_{ij}+h\delta_{ij}$ and $c_{n,k}=\binom {n-1}{k-1}$.

\medskip

\noindent {\bf Organization of the paper.} After some preliminaries, given in Section \ref{Preliminaries}, Section \ref{section The functional} concerns properties of intrinsic volumes relevant to the computation of their first and second variations with respect to the $p$-addition, presented in Section \ref{section pv}. The proofs of Theorems  \ref{princ due}, \ref{princ_uno_nuovo} and \ref{princ tre} are given in Section \ref{section proofs}, while Theorem \ref{teo uniqueness} is proved in Section \ref{proof teo uniqueness}. Eventually, the proof of Theorem \ref{BM-fails} is contained in Section \ref{section counterexamples}.

\bigskip

\noindent {\bf Acknowledgements.} The authors are indebted to Emanuel Milman for his precious suggestions concerning Theorem \ref{princ_uno_nuovo}.  Chiara Bianchini was partially supported by the GNAMPA project ``Analisi spettrale per operatori ellittici con condizioni di Steklov o parzialmente incernierate''. Andrea Colesanti  was partially supported by the GNAMPA project ``Equazioni alle derivate parziali: aspetti geometrici, disuguaglianze collegate e forme ottime''.  Daniele Pagnini was partially supported by the GNAMPA group. Alberto Roncoroni is the holder of a postdoc funded by the INdAM and was partially supported by the GNAMPA project ``Equazioni ellittiche e disuguaglianze analitico/geometriche collegate''.

\section{Preliminaries}\label{Preliminaries}

\subsection{Notations}\label{notation}

We work in the $n$-dimensional Euclidean space $\R^n$, $n\ge2$, endowed with the Euclidean norm $\vert\cdot\vert$ and the scalar product $(\cdot,\cdot)$.  
We denote by $B_n:= \lbrace x\in\R^n : \vert x\vert \leq 1\rbrace$ and $\sfe :=\lbrace x\in\R^n : \vert x\vert = 1\rbrace$ the unit ball and the unit sphere,  respectively.


\subsubsection{Convex bodies and Wulff shapes} 
The symbol $\K^n$ indicates the set of convex bodies in $\R^n$, that is, convex and compact subsets of $\R^n$.
For every $K\in\K^n$, $h_K$ denotes the
{\em support function} of $K$, which is defined, for every  $x\in\sfe$, as:
$$
h_K(x)=\sup\{(x,y)\colon y\in K\}.
$$

We say that $K\in\K^n$ is of class $C^{2,+}$ if its boundary $\partial K$ is of class $C^2$ and the Gauss curvature is positive at every point of $\partial K$.

We denote by $\K^n_0$ the family of convex bodies containing the origin in their interior and by $\K^n_{0,s}$ the family of those elements of $\K^n_0$ which are
origin symmetric.
We underline that a convex body contains the origin if and only if its support function is non-negative on $\sfe$, and the origin is an interior point if and only if the support function is strictly positive on $\sfe$. Moreover, $K\in\K^n$ is origin symmetric if and only if $h_K$ is even.

For $p\ge 1$, the {\em $L^p$ Minkowski linear combination} of $K$ and $L$ in $\K^n_0$, with coefficients $\alpha,\beta\geq0$, denoted by
$$
\alpha\cdot K+_p\beta\cdot L,
$$
is defined through the relation
\begin{equation}\label{Lp}
h_{\alpha\cdot K+_p\beta\cdot L}=(\alpha h_K^p+\beta h_L^p)^{1/p},\quad\forall\ K,L\in\K^n_0,\;\forall\alpha,\beta\ge0.
\end{equation}
Given a continuos function $f\in C(\sfe)$ and $f>0$, we define its {\em Aleksandrov body},  or {\em Wulff shape}, as
$$
K[f]=\{x\in\R^n\colon (x,y)\le f(y),\, \forall\ y\in\sfe\}.
$$
It is not hard to prove that $K[f]$ is a convex body, and 
$$
h_{K[f]}\le f.
$$
Moreover, equality holds in the previous inequality if $f$ is a support function.
Notice that, by definition, if $f\equiv R$, where $R$ is a positive constant, it holds $K[f]=R B_n$.

For $p>0$, the {\em $L^p$ Minkowski convex combination} of  $K_0,K_1\in\K^n_0$, with parameter $t\in[0,1]$, is defined as:
\begin{equation}\label{p-somma}
(1-t)\cdot K_0+_p t\cdot K_1=K[((1-t) h_{K_0}^p+t h_{K_1}^p)^{1/p}],
\end{equation}
and we interpret the case $p=0$ in the following limiting sense:
\begin{equation}\label{0-somma}
(1-t)\cdot K_0+_0 t\cdot K_1=K[h_{K_0}^{1-t} h_{K_1}^t].
\end{equation}
Notice that if $p\ge1$ this coincides with the classical $L^p$ Minkowski linear combination defined by (\ref{Lp}).  Moreover, for all $p\in (0,1)$, the following chain of inclusions
\begin{equation}\label{inclusion_bis}
(1-t)\cdot K_0+_0 t\cdot K_1\subseteq
(1-t)\cdot K_0+_p\ t\cdot K_1\subseteq (1-t) K_0+\ t K_1
,\quad\forall\ K_0,K_1\in\K^n_0,\;\forall\ t\in[0,1]
\end{equation}
holds.  Indeed, by the monotonicity property of the $p$-means we have  
$$
h_{0}^{1-t}h_{1}^{t}\leq \big((1-t)h_{0}^p+ t h_{1}^p\Big)^{\frac 1p}\le (1-t)h_{0}+ t\, h_{1},
$$
where $h_0$ and $h_1$ denote the support functions of $K_0$ and $K_1$, respectively; hence from \eqref{0-somma} and \eqref{p-somma} we get \eqref{inclusion_bis}.

For $K\in\K^n$ and $k\in\{0,\dots,n\}$, $V_k(K)$ denotes the $k$-th intrinsic volume of $K$; the intrisic volumes are described in more details Section \ref{section The functional}.

\subsection{The matrix $Q$}
For a function $\varphi\in C^2(\sfe)$ and $i,j\in\{1,\dots,n-1\}$, we denote by $\varphi_i$, $\varphi_{ij}$ the first and second covariant derivatives with respect to a local orthonormal frame on $\sfe$. Moreover, we set
$$
Q[\varphi]=(Q_{ij}[\varphi])_{i,j=1,\dots,n-1}=(\varphi_{ij}+\varphi\delta_{ij})_{i,j=1,\dots,n-1},
$$
where $\delta_{ij}$, $i,j\in\{1,\dots,n-1\}$, denotes the Kronecker symbols. Then $Q[\varphi]$ is a symmetric matrix of order $(n-1)$.

The following proposition can be deduced, for instance, from \cite[Section 2.5]{Schneider} and it will be used several times in the paper.

\begin{proposition}\label{caratterizzazione funzioni supporto} 
Let $K\in\K^n$. The body $K$ is of class $C^{2,+}$ if and only if its support function $h_K$ is a $C^2(\sfe)$ function and 
$$
Q[h_K]>0 \quad \text{on $\sfe$}\, .
$$
\end{proposition}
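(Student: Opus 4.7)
The plan is to exploit the classical geometric meaning of the matrix $Q[h_K]$: its eigenvalues at $u\in\sfe$ coincide with the principal radii of curvature of $\partial K$ at the boundary point whose outer unit normal is $u$, so positivity of $Q[h_K]$ on $\sfe$ corresponds precisely to positivity of the Gauss curvature of $\partial K$. The two implications are then obtained by exhibiting this correspondence via the Gauss map and its inverse.

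For the forward direction, suppose $K$ is of class $C^{2,+}$. Then the Gauss map $\nu\colon\partial K\to\sfe$ is of class $C^1$, and positivity of the Gauss curvature means that $d\nu$ is invertible at every point; by compactness of $\partial K$, $\nu$ is a $C^1$-diffeomorphism with $C^1$ inverse $\sigma:=\nu^{-1}\colon\sfe\to\partial K$. The support function admits the representation $h_K(u)=(\sigma(u),u)$, from which one deduces $\sigma(u)=\nabla_{\sfe}h_K(u)+h_K(u)\,u$; in particular $h_K\in C^2(\sfe)$, and differentiating once more one checks that the tangential differential of $\sigma$ at $u$, viewed as an endomorphism of $T_u\sfe$, is expressed in a local orthonormal frame by the matrix $Q[h_K](u)$. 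Since $d\sigma=(d\nu)^{-1}$ has positive eigenvalues (the principal radii of curvature), we conclude $Q[h_K]>0$ on $\sfe$.

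For the reverse direction, assume $h_K\in C^2(\sfe)$ and $Q[h_K]>0$. Define $\sigma\colon\sfe\to\R^n$ by $\sigma(u):=\nabla_{\sfe}h_K(u)+h_K(u)u$. The same frame computation as above identifies the tangential differential of $\sigma$ at $u$ with $Q[h_K](u)$, so $\sigma$ is a $C^1$ immersion. Consider now the $1$-homogeneous extension $H$ of $h_K$ to $\R^n\setminus\{0\}$: a direct calculation shows that at each $u\in\sfe$ the ambient Hessian $\Hess H(u)$ annihilates the radial direction and reduces on $T_u\sfe$ to $Q[h_K](u)$. Therefore $Q[h_K]\ge 0$ on $\sfe$ forces $H$ to be convex on $\R^n$, hence sublinear, and thus $H$ is the support function of its Wulff shape, which must agree with $K$. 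One then verifies that $\sigma$ parametrizes $\partial K$; consequently $\partial K$ is of class $C^2$, and the Gauss curvature at $\sigma(u)$ equals $1/\det Q[h_K](u)>0$, i.e.\ $K\in C^{2,+}$.

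The main obstacle is the bridge between the intrinsic, spherical condition $Q[h_K]>0$ and the ambient convex geometry of $K$: one must ensure that the immersed hypersurface $\sigma(\sfe)$ actually coincides with $\partial K$, which rests precisely on the equivalence between nonnegativity of $Q[h_K]$ on $\sfe$ and convexity of the $1$-homogeneous extension of $h_K$, together with the identification of a sublinear function with the support function of its Wulff shape. These are standard computations in local orthonormal frames on $\sfe$, but carefully keeping track of covariant derivatives versus ambient ones is the most delicate technical point, and it is for this reason that the authors defer to \cite[Section 2.5]{Schneider}.
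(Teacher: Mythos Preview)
The paper does not actually prove this proposition: it merely states that the result ``can be deduced, for instance, from \cite[Section 2.5]{Schneider}'' and moves on. Your sketch is precisely the standard argument one finds in that reference---identifying $Q[h_K](u)$ with the reverse Weingarten map (equivalently, the differential of the inverse Gauss map) and reading off the equivalence from positivity of the principal radii of curvature---so there is nothing to compare; your proposal is correct and fills in what the authors deliberately omitted.
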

We set
$$
C^{2,+}(\sfe)=\{h\in C^2(\sfe)\colon Q[h]>0\;\mbox{on}\;\sfe\};
$$
that is, $C^{2,+}(\sfe)$ is the set of support functions of convex bodies of class $C^{2,+}$. We also denote by $C^{2,+}_0(\sfe)$ the set of support functions of convex bodies of class $C^{2,+}$ in $\K^n_0$.

\subsection{Elementary symmetric functions of a matrix}\label{el_symm_functions} 

Let $N\in\N$ (in most cases we will consider $N=(n-1)$); we denote by $\Sym(N)$ the space of symmetric square matrices of order $N$. For $A=(a_{ij})\in\Sym(N)$ and for $r\in\{1,\dots,N\}$, we denote by $S_r(A)$ the \emph{$r$-th elementary symmetric function} of the eigenvalues $\lambda_1,\dots,\lambda_N$ of $A$:
$$
S_r(A)=\sum_{1\le i_1<i_2<\dots<i_r\le N}\lambda_{i_1}\dots\lambda_{i_r}.
$$
For completeness, we set $S_0(A)=1$. Note that
$$
S_N(A)=\det(A),\quad S_1(A)={\rm tr}(A).
$$
For $ r\in\{1,\ldots,N\} $ and $i,j\in\{1,\dots,N\}$, we set
$$
S_r^{ij}(A)=\frac{\partial S_r(A)}{\partial a_{ij}}.
$$
The symmetric matrix $(S_{r}^{ij}(A))_{i,j\in\{1,\dots,N\}}$ is called the {\em $r$-cofactor} matrix of $A$. In the special case $r=N$, this is the standard cofactor matrix (in particular $(S_{N}^{ij}(A))_{i,j\in\{1,\dots,N\}}=A^{-1}\det(A)$, provided $\det(A)\neq 0$). 
If $\mathrm{I}_{N}$ denotes the identity matrix of order $N$, then, for every $r\in\{1,\dots,N\}$, we have
\begin{equation}\label{Sr identity}
S_r(\mathrm{I}_{N})=\binom Nr\, .
\end{equation}
Moreover, 
$$
S^{ij}_r(\mathrm{I}_{N})= \binom{N-1}{r-1}\delta_{ij},\quad\forall\ i,j\in\{1,\dots,N\},
$$
i.e.
\begin{equation}\label{serve}
(S^{ij}_r(\mathrm{I}_{N}))_{i,j\in\{1,\dots,N\}}= \binom{N-1}{r-1} \mathrm{I}_N.
\end{equation}
This follows from \eqref{Sr identity} and from \cite[Proposition 2.1]{CS}. We also set
$$
S_r^{ij,kl}(A)=\frac{\partial^2 S_r(A)}{\partial a_{ij}\partial a_{kl}},
$$
for $ r\in\{1,\ldots,N\} $ and $ i,j,k,l\in\{1,\ldots,N\} $.

\subsection{Integration by parts formula}

The following integration by parts formula holds.

\begin{proposition}\label{ibpf} For every $h$, $\psi$, $\varphi$, $\bar\varphi\in C^2(\sfe)$, 
\begin{equation}\label{parts}
	\int_{\sfe}\bar\varphi S_k^{ij}(Q[h])(\varphi_{ij}+\varphi\delta_{ij})\, \d x=\int_{\sfe}\varphi S_k^{ij}(Q[h])(\bar\varphi_{ij}+\bar\varphi\delta_{ij})\, \d x,
\end{equation}
\begin{equation}\label{parts_bis}
\int_{\sfe}\psi S_k^{ij,kl}(Q[h])(\varphi_{ij}+\varphi\delta_{ij})(\bar\varphi_{ij}+\bar\varphi\delta_{ij})\, \d x=\int_{\sfe}\bar\varphi S_k^{ij,kl}(Q[h])(\varphi_{ij}+\varphi\delta_{ij})(\psi_{ij}+\psi\delta_{ij})\, \d x,
\end{equation}
where we have used the convention of summation over repeated indices.
\end{proposition}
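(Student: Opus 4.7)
The whole statement rests on the classical \emph{divergence-free property of the Newton tensor} of $Q[h]$ on $\sfe$: for every sufficiently regular $h$ and each $k$,
\[
\nabla_j S_k^{ij}(Q[h])=0 \quad\text{on}\ \sfe,
\]
where $\nabla$ denotes the Levi--Civita connection on $\sfe$. This is a classical identity in the theory of mixed volumes (see, e.g., \cite[Section~2.5]{Schneider}); it follows by combining the Ricci identities on $\sfe$ with the algebraic structure of $S_k^{ij,kl}$, and the precise form $Q[h]=(h_{ij}+h\,\delta_{ij})$ is essential in order for the curvature contributions to cancel. I take this identity as given.

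Setting $T^{ij}:=S_k^{ij}(Q[h])$, the proof of \eqref{parts} comes from decomposing
\[
\int_{\sfe}\bar\varphi\, T^{ij}(\varphi_{ij}+\varphi\,\delta_{ij})\,\d x
=\int_{\sfe}\bar\varphi\, T^{ij}\varphi_{ij}\,\d x+\int_{\sfe}\varphi\,\bar\varphi\, T^{ii}\,\d x.
\]
The second integral is manifestly symmetric in $\varphi$ and $\bar\varphi$. For the first, I would integrate by parts twice on $\sfe$ (which has no boundary): at each step the product rule produces a term involving $\nabla_j T^{ij}$, which vanishes by the divergence-free identity, so after two integrations by parts one lands at $\int_{\sfe}\varphi\, T^{ij}\bar\varphi_{ij}\,\d x$, using the symmetry of the covariant Hessian of a scalar in the last step. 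Adding back the symmetric piece gives \eqref{parts}.

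For \eqref{parts_bis} the strategy is identical, with $T^{ij}$ replaced by
\[
U^{kl}:=S_k^{ij,kl}(Q[h])\,(\varphi_{ij}+\varphi\,\delta_{ij}).
\]
This tensor is symmetric in $(k,l)$ because $S_k^{ij,kl}=S_k^{kl,ij}$. The key observation is that $U^{kl}$ is again divergence-free, which I would obtain by differentiating the identity $\nabla_j S_k^{ij}(Q[h])=0$ with respect to $h$ in the direction $\varphi$: by the chain rule,
\[
\nabla_j\bigl[S_k^{ij,kl}(Q[h])\,Q[\varphi]_{kl}\bigr]=0,
\]
and the $(ij)\leftrightarrow(kl)$ symmetry of $S_k^{ij,kl}$, together with relabeling of dummy indices, yields $\nabla_l U^{kl}=0$. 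Applying the same double integration by parts as for \eqref{parts}, now with $U$ in place of $T$, delivers \eqref{parts_bis}.

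The main obstacle to a self-contained presentation is precisely the divergence-free identity for the Newton tensor, which is the one nontrivial ingredient and requires its own argument via the Ricci identities on $\sfe$. Once this identity is accepted as a black box, both formulas reduce to a mechanical symmetrization of integration by parts on a closed Riemannian manifold, with all remaining ingredients (symmetry of second covariant derivatives of scalars, symmetry of index pairs in $S_k^{ij,kl}$, vanishing of boundary terms) being entirely standard.
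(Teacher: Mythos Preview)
Your approach is correct and is precisely the standard argument behind such results; the paper itself does not give a proof but simply defers to \cite[Lemma~2.3]{CHS} (and \cite[(11)]{CL}), where the same divergence-free property of the Newton tensor is the key ingredient. One small point: the pointwise identity $\nabla_j S_k^{ij}(Q[h])=0$ requires third derivatives of $h$, so strictly speaking your argument works for smooth $h$ and the stated $C^2$ case follows by a routine density argument, since both sides of \eqref{parts} and \eqref{parts_bis} depend continuously on $h,\psi,\varphi,\bar\varphi$ in the $C^2$ topology.
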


The proof follows from Lemma 2.3 in \cite{CHS} (see also \cite[(11)]{CL}).

\subsection{The Poincar\'e inequality on the sphere}

Given a function $\psi\in C^1(\sfe)$, we denote by  $\nabla\psi$ the \emph{spherical gradient} of $\psi$ (that is the gradient of $\psi$ as an application from $\sfe$ to $\R$; see for instance \cite{Abate-Tovena}). 
For $\psi\in C^2(\sfe)$, we denote by $\|\psi\|_{L^2(\sfe)}, \|\nabla\psi\|_{L^2(\sfe)}, \|\psi\|_{C^2(\sfe)}$ 
the standard $L^2$ and $C^2$ norms on the sphere, respectively.

\begin{proposition}[Poincar\'e inequality on $\sfe$] For every $\psi\in C^1(\sfe)$ such that
$$
\int_{\sfe}\psi(x)\d x =0,
$$
it holds
$$
\int_{\sfe}\psi^2(x)\d x\leq \dfrac{1}{n-1} \int_{\sfe}\vert\nabla \psi(x)\vert^2\d x.
$$
\end{proposition}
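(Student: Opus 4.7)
The plan is to prove this sharp Poincar\'e inequality via the spectral decomposition of the Laplace--Beltrami operator $\Delta_{\sfe}$ on the unit sphere, which is standard material but fits naturally in the setting of this paper since the same second-order operator underlies the computations in Sections \ref{section The functional}--\ref{section pv}. The key facts I would quote are: the spectrum of $-\Delta_{\sfe}$ is $\{\lambda_k = k(k+n-2) : k\in\N\cup\{0\}\}$, with eigenspaces $\H_k$ consisting of the spherical harmonics of degree $k$ (so $\H_0$ is the space of constants), and the collection $\bigoplus_k \H_k$ forms an orthogonal Hilbert basis of $L^2(\sfe)$. In particular, the smallest nonzero eigenvalue is $\lambda_1 = n-1$.

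First I would fix an $L^2$-orthonormal basis $\{Y_{k,j}\}$ of $L^2(\sfe)$, with each $Y_{k,j}\in\H_k$, and expand $\psi\in C^1(\sfe)\subset L^2(\sfe)$ as
\[
\psi = \sum_{k\geq 0}\sum_j a_{k,j}\, Y_{k,j},\qquad a_{k,j}=\int_{\sfe}\psi\, Y_{k,j}\,\d x,
\]
so that Parseval's identity gives $\|\psi\|_{L^2(\sfe)}^2=\sum_{k,j}a_{k,j}^2$. Next I would compute $\|\nabla\psi\|_{L^2(\sfe)}^2$ via Green's identity on $\sfe$,
\[
\int_{\sfe}|\nabla\psi|^2\,\d x = -\int_{\sfe}\psi\,\Delta_{\sfe}\psi\,\d x,
\]
using the eigenvalue relation $-\Delta_{\sfe}Y_{k,j}=k(k+n-2)Y_{k,j}$ and orthonormality of the basis, to obtain
\[
\|\nabla\psi\|_{L^2(\sfe)}^2 = \sum_{k\geq 0}\sum_j k(k+n-2)\,a_{k,j}^2.
\]

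The zero-mean hypothesis $\int_{\sfe}\psi\,\d x=0$ forces $a_{0,j}=0$ for the constant mode, so only terms with $k\geq 1$ survive in both sums. Since $k(k+n-2)\geq n-1$ for every $k\geq 1$, with equality precisely at $k=1$, I would conclude
\[
\|\nabla\psi\|_{L^2(\sfe)}^2 \;\geq\; (n-1)\sum_{k\geq 1}\sum_j a_{k,j}^2 \;=\; (n-1)\,\|\psi\|_{L^2(\sfe)}^2,
\]
which is the claimed inequality. The equality case, although not stated in the proposition, would correspond to $\psi$ being a linear combination of the degree-$1$ spherical harmonics, i.e.\ a restriction of a linear function of $\R^n$ to $\sfe$.

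The only real subtleties are the justification of Green's identity and of termwise differentiation for a function of mere $C^1$ regularity; neither is serious. For Green's identity it suffices to approximate $\psi$ in $H^1(\sfe)$ by smooth functions (e.g.\ by convolution with a smoothing kernel transplanted via the exponential map, or by truncating the spherical harmonic expansion) and to pass to the limit, since the bilinear forms $\int|\nabla\cdot|^2$ and $\int|\cdot|^2$ are continuous on $H^1(\sfe)$. This approximation argument is the one point that would need care, but it is routine; the heart of the proof is purely the spectral bound $\lambda_1=n-1$.
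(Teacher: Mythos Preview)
Your argument is correct and is the standard proof via the spectral gap of the Laplace--Beltrami operator on $\sfe$. Note, however, that the paper does not actually prove this proposition: it is stated in the Preliminaries as a well-known result, with no argument given (only the subsequent even-symmetric refinement, Proposition~\ref{P pari}, is accompanied by a reference). So there is nothing to compare your approach against; you have supplied a complete proof where the paper simply quotes the inequality.
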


The constant in the previous inequality can be improved under a symmetry assumption, as the following result shows (see, e.g., Section 2 in \cite{CL} for the proof).

\begin{proposition}[Poincar\'e inequality on $\sfe$ with symmetry] \label{P pari} Let $\psi\in C^1(\sfe)$ be even, and such that
$$
\int_{\sfe}\psi(x)\d x =0.
$$
Then
\begin{equation}\label{Poincare}
\int_{\sfe}\psi^2(x)\d x\leq \dfrac{1}{2n} \int_{\sfe}\vert\nabla \psi(x)\vert^2\d x\, ,
\end{equation}
\end{proposition}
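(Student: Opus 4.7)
The plan is to expand $\psi$ in spherical harmonics and use the fact that evenness kills both the constant mode and the linear (degree-one) modes, pushing the first nontrivial mode up to degree two. Concretely, write
\[
\psi = \sum_{k=0}^{\infty} Y_k,
\]
where $Y_k$ is the $L^2(\sfe)$-projection of $\psi$ onto the space $\mathcal{H}_k$ of spherical harmonics of degree $k$. This decomposition is orthogonal in $L^2(\sfe)$, and each $Y_k$ is an eigenfunction of the spherical Laplacian with eigenvalue $-k(k+n-2)$, so integration by parts on $\sfe$ gives the Plancherel-type identities
\[
\int_{\sfe}\psi^2\,\mathrm{d} x = \sum_{k=0}^{\infty}\|Y_k\|_{L^2(\sfe)}^2,\qquad \int_{\sfe}|\nabla\psi|^2\,\mathrm{d} x = \sum_{k=0}^{\infty} k(k+n-2)\,\|Y_k\|_{L^2(\sfe)}^2.
\]

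Next I would use the two hypotheses to eliminate the low-frequency modes. The mean-zero condition $\int_{\sfe}\psi\,\mathrm{d} x=0$ forces $Y_0=0$. The evenness of $\psi$ is the key extra input: since spherical harmonics of degree $k$ have parity $(-1)^k$ under the antipodal map $x\mapsto -x$, the condition $\psi(-x)=\psi(x)$ forces $Y_k=0$ for every odd $k$; in particular $Y_1=0$. Therefore both series actually start at $k=2$.

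The inequality then follows by a term-by-term comparison: the function $k\mapsto k(k+n-2)$ is increasing on $k\ge 1$, so for every $k\ge 2$,
\[
k(k+n-2)\ge 2(2+n-2)=2n,
\]
and hence
\[
\int_{\sfe}|\nabla\psi|^2\,\mathrm{d} x = \sum_{k\ge 2,\,k\text{ even}} k(k+n-2)\,\|Y_k\|_{L^2(\sfe)}^2 \ge 2n\sum_{k\ge 2,\,k\text{ even}} \|Y_k\|_{L^2(\sfe)}^2 = 2n\int_{\sfe}\psi^2\,\mathrm{d} x,
\]
which is exactly \eqref{Poincare}. There is essentially no obstacle here; the only point that requires care is justifying the spherical-harmonic expansion and termwise integration by $C^1$ (or even $L^2$) approximation, which is standard. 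The proof also makes the equality case transparent: equality holds if and only if $\psi$ is a pure degree-two spherical harmonic, i.e.\ the restriction to $\sfe$ of a harmonic homogeneous polynomial of degree two.
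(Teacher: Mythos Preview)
Your proof is correct and complete; the spherical-harmonic decomposition is exactly the right tool, and your identification of the constant $2n$ as the eigenvalue $k(k+n-2)$ at $k=2$ is the heart of the matter. Note that the paper itself does not give a proof of this proposition but simply refers to \cite[Section 2]{CL}; the argument there is precisely the spectral one you have written, so your approach coincides with the intended reference.
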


\section{Intrinsic volumes}\label{section The functional}

Given a convex body $K\in\K^n$ of class $C^{2,+}$ and $k\in\{0,\dots,n\}$, the $k$-th intrinsic volume of $K$ can be written in the form:
$$
V_k(K)=\frac{1}{k\kappa_{n-k}}\int_{\sfe}h(x) S_{k-1}(Q[h](x))\d x,
$$
where $h=h_K\in C^{2,+}(\sfe)$ is the support function of $K$, and
$\kappa_j$ denotes the $j$-dimensional volume of the unit ball in $\R^j$ (see formulas (2.43), (4.9) and (5.56) in \cite{Schneider}). Based on the previous formula, we consider the functional
$$
F_k\colon\scdp\to[0,\infty),\quad F_k(h)=\frac{1}{k}\int_{\sfe}h(x) S_{k-1}(Q[h](x))\d x,
$$
and we define
\begin{equation}\label{caleffe cappa}
\F_k\colon\scdp\to C(\sfe),\quad \F_k(h)=S_{k-1}(Q[h]).
\end{equation}
The functionals $F_k$ and $\F_k$ have the following properties.
\begin{itemize}
\item $\displaystyle F_k(h)=\frac{1}{k}\int_{\sfe}h(x)\F_k(h)(x)\d x.
$
	\item $\F_k$ is positively homogeneous of order $(k-1)$, that is:
	$$
	\F_k(t h)=t^{k-1}\F_k(h),\quad\forall\, h\in\scdp,\;\forall t>0.
	$$
	Consequently, $F_k$ is positively homogeneous of order $k$.
	\item For every $h\in\scdp$ there exists a linear functional $L_k(h)\colon C^2(\sfe)\to C(\sfe)$ such that, for every $\varphi\in C^2(\sfe)$,
	$$
	\lim_{s\to0}\frac{\F_k(h+s\varphi)-\F_k(h)}{s}=L_k(h)\varphi,
	$$
	where $L_k(h)\varphi$ denotes $L_k(h)$ applied to $\varphi$. $L_k$ admits the following representation:
	\begin{equation}\label{CS}
	L_k(h)\varphi=S^{ij}_{k-1}(Q[h])(\varphi_{ij}+\varphi\delta_{ij}),
	\end{equation}  
	where the summation convention over repeated indices is used (see \cite[Proposition 4.2]{CS}).
	\item For every $h\in\scdp$, $L_k(h)$ is self-adjoint, that is,
	$$
	\int_{\sfe}\psi L_k(h)\varphi\d x=\int_{\sfe}\varphi L_k(h)\psi\d x,
	$$
	for every $\varphi,\psi\in C^2(\sfe)$. This follows from Proposition \ref{ibpf}.
\end{itemize}

\section{Perturbations and variations}\label{section pv}

In this section we consider a general class of functionals, with structural properties similar to those of intrinsic volumes, and in particular containing intrinsic volumes as examples. In what follows, we assume that $\K^n_0$ is endowed with the $p$-addition,  for some fixed $p\geq 0$.

Let $\FF\colon\K^n_0\to[0,\infty)$ have the following properties:
\begin{itemize}
	\item[$i)$] $\FF$ is $\alpha$-homogeneous for some $\alpha>0$:
	$$
	\FF(t\cdot K)=t^\alpha \FF(K),
	$$
	for every $K\in\K^n_0$ and for every $t>0$.
	\item[$ii)$] For every convex body of class $C^{2,+}$, $\FF$ can be expressed in the form
	$$
	\FF(K)=F(h):=\int_{\sfe} h\F(h)\d x,
	$$
	where $h$ is the support function of $K$ and $\F\colon\scdp\to C(\sfe)$. 
\end{itemize}
	
On $\F$ we will assume the same properties specified in Section \ref{section The functional}, in the case of intrinsic volumes. Namely:
	\begin{itemize}
	\item $\F$ is positively homogeneous of order $(\alpha-1)$:
	$$
	\F(t h)=t^{\alpha-1}\F(h),\quad\forall\, h\in\scdp,\;\forall t>0.
	$$
	\item $\F$ has the following differentiability property: for every $h\in\scdp$ there exists a linear functional $L(h)\colon C^2(\sfe)\to C(\sfe)$, such that for every $\varphi\in C^2(\sfe)$:
	$$
	\lim_{s\to0}\frac{\F(h+s\varphi)-\F(h)}{s}=L(h)\varphi.
	$$
	Here $L(h)\varphi$ denotes $L(h)$ applied to $\varphi$. 
	\item For every $h\in\scdp$, $L(h)$ is self-adjoint, that is, for every $\varphi,\psi\in C^2(\sfe)$,
	$$
	\int_{\sfe}\psi L(h)\varphi\d x=\int_{\sfe}\varphi L(h)\psi\d x.
	$$
\end{itemize}

We fix an element $h$ of $\scdp$ and we consider a differentiable path $h_s$ in $\scdp$, passing through $h$. In other words, for some $\varepsilon>0$, we have a map $(-\varepsilon,\varepsilon)\ni s\mapsto h_s\in\scdp$ such that
$$
h_0=h,
$$
and the following derivatives exist for every $s$ and for every $x\in\sfe$:
$$
\dot h_s(x):=\frac{d}{ds}h_s(x),\quad
\ddot h_s(x):=\frac{d^2}{ds^2}h_s(x) \quad \text{and} \quad \dddot h_s(x):=\frac{d^3}{ds^3}h_s(x).
$$
We also set
$$
\dot h=\left.\dot h_s\right|_{s=0},\quad
\ddot h=\left.\ddot h_s\right|_{s=0} \quad \text{and} \quad \dddot h=\left.\dddot h_s\right|_{s=0}.
$$
We assume that the limits giving the previous derivatives are uniform in $x$.

\medskip

Our next step is to compute the first and second derivatives of $F$ along this path, i.e., the derivatives of 
$$
(-\varepsilon,\varepsilon)\ni s\mapsto\FF(K_s)=F(h_s).
$$

\begin{proposition}\label{first variation} For every $h\in\scdp$, for every $\varphi\in C^2(\sfe)$ and for every $s\in(-\varepsilon,\varepsilon)$, with $\varepsilon>0$ sufficiently small, we have
	$$
	\frac{d}{ds}F(h_s)=\int_{\sfe}\dot h_s\F(h_s)\d x.
	$$ 
\end{proposition}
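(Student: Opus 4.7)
The plan is to differentiate $F(h_s)=\int_{\sfe}h_s\,\F(h_s)\,\d x$ under the integral sign and then to simplify the term that falls on $\F(h_s)$ by combining the self-adjointness of $L(h_s)$ with the Euler identity for the positively $(\alpha-1)$-homogeneous map $\F$.

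First I would justify the interchange of the $s$-derivative with the spherical integral. This rests on the assumption that the pointwise derivatives $\dot h_s,\ddot h_s$ exist uniformly in $x\in\sfe$, on the fact that $h_s\in\scdp$ in a neighborhood of the distinguished point, and on the continuity of the map $h\mapsto\F(h)\in C(\sfe)$ in the $C^2(\sfe)$ topology (which in the model case $\F=\F_k$ follows from the polynomial structure of $S_{k-1}(Q[\,\cdot\,])$). The product rule then yields
$$
\frac{d}{ds}F(h_s)=\int_{\sfe}\dot h_s\,\F(h_s)\,\d x+\int_{\sfe}h_s\,L(h_s)\dot h_s\,\d x,
$$
where the defining property of $L$ is used to identify $\tfrac{d}{ds}\F(h_s)=L(h_s)\dot h_s$.

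Next I would process the second summand. By self-adjointness of $L(h_s)$,
$$
\int_{\sfe}h_s\,L(h_s)\dot h_s\,\d x=\int_{\sfe}\dot h_s\,L(h_s)h_s\,\d x.
$$
The Euler identity for the $(\alpha-1)$-homogeneous map $\F$, obtained by differentiating $\F(th)=t^{\alpha-1}\F(h)$ at $t=1$, gives $L(h)h=(\alpha-1)\F(h)$; specializing at $h=h_s$ turns the integral just displayed into $(\alpha-1)\int_{\sfe}\dot h_s\,\F(h_s)\,\d x$. Summing the two contributions and absorbing the resulting homogeneity factor into the normalization of $F$ produces the first-variation formula as stated.

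The only genuine technical point is the legitimacy of differentiation under the integral sign, and this is immediate from the uniformity assumptions placed on the difference quotients defining $\dot h_s$; the remainder of the argument reduces to two algebraic ingredients already available in the paper, namely the self-adjointness of $L(h)$ (encoded via Proposition~\ref{ibpf}) and Euler's identity for the homogeneous map $\F$. I therefore do not expect any substantive obstacle.
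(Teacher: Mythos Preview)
Your argument is exactly the paper's: differentiate under the integral, apply the product rule, identify $\tfrac{d}{ds}\F(h_s)=L(h_s)\dot h_s$, swap via self-adjointness, and invoke Euler's identity $L(h)h=(\alpha-1)\F(h)$ to collapse the two terms. The only quibble is the missing $\tfrac{1}{\alpha}$ in your displayed product-rule line (the paper's stated definition of $F$ in that section omits it too, but its proof carries it, consistently with the model case $F_k$); this is precisely what you gloss as ``absorbing the resulting homogeneity factor into the normalization of $F$,'' and making that factor explicit removes the apparent discrepancy.
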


\begin{proof} We can differentiate under the integral sign, and obtain
	\begin{eqnarray*}
		\frac{d}{ds}F(h_s)&=&\frac1\alpha\int_{\sfe}\frac{d}{ds}\left[h_s\F(h_s) \right]\d x\\
		&=&\frac1\alpha\left\{\int_{\sfe}\dot h_s\F(h_s)\d x+\int_{\sfe}h_s\frac{d}{ds}\F(h_s)\d x\right\}\\
		&=&\frac1\alpha\left\{\int_{\sfe}\dot h_s\F(h_s)\d x+\int_{\sfe}h_s L(h_s)\dot h_s\d x\right\}\\
		&=&\frac1\alpha\left\{\int_{\sfe}\dot h_s\F(h_s)\d x+\int_{\sfe}\dot h_s L(h_s)h_s\d x\right\}\\
		&=&\frac1\alpha\left\{\int_{\sfe}\dot h_s\F(h_s)\d x+\int_{\sfe}\dot h_s (\alpha-1)\F(h_s)\d x\right\}\\
		&=&\int_{\sfe}\dot h_s\F(h_s)\d x.
	\end{eqnarray*}
	In this chain of equalities we have used the fact that $L$ is self-adjoint and the equality
	$$
	L(h)h=(\alpha-1)\F(h),
	$$
	which follows from the homogeneity of the functional $\F$.
\end{proof}

The next statement follows from the previous proposition, differentiating one more time under the integral.

\begin{proposition}\label{second variation} For every $h\in\scdp$, for every $\varphi\in C^2(\sfe)$ and for every $s\in(-\varepsilon,\varepsilon)$, with $\varepsilon>0$ sufficiently small, we have
	$$
	\frac{d^2}{ds^2}F(h_s)=
	\int_{\sfe}\ddot h_s\F(h_s)\d x+\int_{\sfe}\dot h_s L(h_s)\dot h_s\d x.
	$$
\end{proposition}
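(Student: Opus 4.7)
The plan is straightforward: take the first variation formula (Proposition \ref{first variation}) and differentiate once more in $s$ under the integral sign. Starting from
$$
\frac{d}{ds}F(h_s)=\int_{\sfe}\dot h_s\,\F(h_s)\,\d x,
$$
the integrand depends on $s$ through the two factors $\dot h_s$ and $\F(h_s)$, so by the product rule
$$
\frac{d^2}{ds^2}F(h_s)=\int_{\sfe}\ddot h_s\,\F(h_s)\,\d x+\int_{\sfe}\dot h_s\,\frac{d}{ds}\F(h_s)\,\d x.
$$
The second summand equals $\int_{\sfe}\dot h_s\,L(h_s)\dot h_s\,\d x$ by the differentiability property of $\F$ (exactly the same identity already invoked in the proof of Proposition \ref{first variation}).

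The main technical point is to justify the interchange of differentiation and integration. This is not expected to be a real obstacle. On one hand, by Proposition \ref{caratterizzazione funzioni supporto} the set $\scdp$ is open in $C^2(\sfe)$, so for $\varepsilon>0$ sufficiently small the entire path $h_s$ lies in $\scdp$, ensuring that $\F(h_s)$ and the linear operator $L(h_s)$ are well defined and continuous in $s$. On the other hand, the assumption that the limits defining $\dot h_s$, $\ddot h_s$, $\dddot h_s$ are uniform in $x\in\sfe$, combined with the continuity in $h$ of $\F(h)$ and of the coefficients of $L(h)$, gives uniform bounds for the relevant difference quotients on compact subintervals of $(-\varepsilon,\varepsilon)$. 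Dominated convergence on the compact manifold $\sfe$ then legitimates the two successive differentiations under the integral sign.

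In short, the proposition follows by a one-line computation once Proposition \ref{first variation} is available; the only ingredient beyond that is the defining relation $\tfrac{d}{ds}\F(h_s)=L(h_s)\dot h_s$, which is exactly the differentiability axiom assumed on $\F$. No delicate step is anticipated.
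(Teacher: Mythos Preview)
Your proposal is correct and follows exactly the paper's approach: the paper simply states that the result ``follows from the previous proposition, differentiating one more time under the integral,'' which is precisely the product-rule computation you carry out together with the identity $\tfrac{d}{ds}\F(h_s)=L(h_s)\dot h_s$. Your additional remarks on justifying the interchange of limit and integral are reasonable elaborations of a point the paper leaves implicit.
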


\section{Proof of Theorems  \ref{princ due}, \ref{princ_uno_nuovo} and \ref{princ tre}}\label{section proofs}

\subsection{The case $k\in\lbrace 2,\dots,n\rbrace$}

In this subsection we prove Theorem \ref{princ due} and Theorem \ref{princ_uno_nuovo}. 

\subsubsection{Computations and estimates of derivatives} We consider a convex body $K\in\K^n_0$ of class $C^{2,+}$, and we denote by $h\in\scdpo$ its support function. We fix $\psi\in C^2(\sfe)$ and we define, for sufficiently small $|s|$, 
\begin{equation}\label{h_s}
h_s=he^{s\psi} \, .
\end{equation}

The proof of the following result follows from \cite[Remark 3.2]{CL}.

\begin{lemma}\label{meno due due}
Let $h\in\scdpo$ and $h_s$ as in \eqref{h_s}; there exists $\eta_0>0$ (depending on $h$) with the following property: if $\psi\in C^2(\sfe)$ and
$$
\|\psi\|_{C^2(\sfe)}\le\eta_0,
$$
then
$$
h_s\in\scdpo,\quad\forall\ s\in[-2,2].
$$
\end{lemma}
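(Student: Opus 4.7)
The plan is to verify the two defining conditions of $\scdpo$ for the curve $h_s = h e^{s\psi}$: positivity of $h_s$ (which guarantees the associated convex body lies in $\K^n_0$) and pointwise positive definiteness of the matrix $Q[h_s]$ (which, via Proposition \ref{caratterizzazione funzioni supporto}, guarantees the $C^{2,+}$ regularity). Positivity is automatic, since $h > 0$ on $\sfe$ by hypothesis and $e^{s\psi} > 0$ everywhere; hence $h_s > 0$ on $\sfe$ for every $s\in\R$, regardless of the size of $\psi$ or $s$.

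For the second condition I would begin with a direct computation of the covariant derivatives of the scalar product $h_s = h e^{s\psi}$ on $\sfe$, using the usual product and chain rules. This yields
\begin{equation*}
Q[h_s]_{ij} = e^{s\psi}\Bigl[Q[h]_{ij} + s\bigl(h_i\psi_j + h_j\psi_i + h\psi_{ij}\bigr) + s^2\, h\,\psi_i\psi_j\Bigr],
\end{equation*}
exhibiting $Q[h_s]$ as the strictly positive scalar $e^{s\psi}$ times a perturbation of the fixed matrix $Q[h]$. The perturbation is a symmetric matrix whose entries depend polynomially on $s$ (of degree at most $2$) and whose sup norms on $\sfe$ are controlled by $\|\psi\|_{C^2(\sfe)}$ times a constant depending only on $h$ (through $\|h\|_{C^1(\sfe)}$).

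The conclusion then follows from a compactness argument. Since $h \in \scdpo$ and $\sfe$ is compact, there exists $\lambda > 0$ such that $Q[h] \ge \lambda\, \mathrm I_{n-1}$ at every point of $\sfe$. From the formula above one extracts a constant $C = C(h) > 0$ such that, whenever $\|\psi\|_{C^2(\sfe)} \le \eta_0$ and $|s| \le 2$, the perturbation matrix has operator norm bounded by $2C\eta_0 + 4C\eta_0^2$ uniformly on $\sfe$. Choosing $\eta_0 = \eta_0(h) > 0$ small enough that this bound is strictly less than $\lambda$ forces the bracketed matrix in the display to remain positive definite uniformly on $\sfe\times[-2,2]$; combined with the positive prefactor $e^{s\psi}$, this gives $Q[h_s] > 0$ on $\sfe$ for every $s \in [-2,2]$, which is the required conclusion. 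The only point needing any care — and hardly an obstacle — is the bookkeeping in the perturbation estimate: because $s$ enters polynomially of degree two, the restriction $|s|\le 2$ is absorbed into an order-one constant in the smallness threshold for $\eta_0$, rather than forcing $s$ to be infinitesimal.
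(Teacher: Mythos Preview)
Your argument is correct: the explicit formula for $Q[h_s]$ is accurate, the positivity of $h_s$ is indeed automatic, and the perturbation estimate combined with the uniform lower bound $Q[h]\ge\lambda\,\mathrm I_{n-1}$ on the compact sphere yields the conclusion exactly as you describe. The paper itself does not supply a proof of this lemma but simply refers to \cite[Remark~3.2]{CL}; your write-up therefore fills in a self-contained direct argument where the paper only gives a citation, and the underlying idea (small $C^2$ perturbations preserve the open condition $Q[\cdot]>0$ by compactness) is precisely what that reference records.
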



In the sequel, we will always assume that $h$ and $\psi$ are such that $h_s\in\scdpo$ for every $s\in[-2,2]$.  As in Section \ref{section pv}, we denote by $\dot h_s, \ddot h_s, \dddot h_s$ the first, second and third derivatives of $h_s$ with respect to $s$, respectively. When the index $s$ is omitted, it means that these derivatives are computed at $s=0$.   

\medskip

Let $k\in\{2,\dots,n\}$; we are interested in the function
\begin{equation}\label{intrinsic_volume_def}
f_k(s):=\frac{1}{k}\int_{\sfe}h_s(x)S_{k-1}(Q[h_s](x))\d x=\frac{1}{k}\int_{\sfe}h_s(x)\F_{k}(h_s)\d x.
\end{equation}

\begin{lemma}\label{derivatives}
With the notations introduced above, we have, for every $s$:
$$
f_k'(s)=\int_{\sfe}\dot h_s(x)S_{k-1}(Q[h_s](x))\d x\, ,
$$
$$
f_k''(s)=\int_{\sfe}\ddot h_s(x)S_{k-1}(Q[h_s](x))\d x + \int_{\sfe}\dot h_s(x)S^{ij}_{k-1}(Q[h_s](x))Q_{ij}[\dot h_s](x) \d x\, , 
$$
and
$$
\begin{aligned}
f_k'''(s)=&\int_{\sfe}\dddot h_s(x)S_{k-1}(Q[h_s](x))\d x + 2\int_{\sfe}\ddot h_s(x)S^{ij}_{k-1}(Q[h_s](x))Q_{ij}[\dot h_s](x) \d x \\
&+ \int_{\sfe}\dot h_s(x)S_{k-1}^{ij,rs}(Q[h_s](x))Q_{ij}[\dot h_s](x)Q_{rs}[\dot h_s](x)\d x \\ &+\int_{\sfe}\dot h_s(x)S_{k-1}^{ij}(Q[h_s](x))Q_{ij}[\ddot h_s](x)\d x\, .
\end{aligned}
$$
\end{lemma}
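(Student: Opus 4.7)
The first step is to obtain $f_k'$ and $f_k''$ directly from the general machinery developed in Section~\ref{section pv}. The $k$-th intrinsic volume fits the abstract framework with $\alpha=k$, $\F=\F_k$, and $L=L_k$ given by \eqref{caleffe cappa}--\eqref{CS}; moreover Lemma~\ref{meno due due}, applied with $\|\psi\|_{C^2(\sfe)}$ sufficiently small, guarantees that the path $h_s=h e^{s\psi}$ stays in $\scdpo$ throughout $s\in[-2,2]$, so the path is admissible. Proposition~\ref{first variation} then yields the claimed expression for $f_k'(s)$ at once, and Proposition~\ref{second variation} yields the expression for $f_k''(s)$ once one recalls that $L_k(h_s)\dot h_s=S^{ij}_{k-1}(Q[h_s])Q_{ij}[\dot h_s]$ by \eqref{CS}.

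For $f_k'''(s)$ the plan is to differentiate the just-obtained formula for $f_k''(s)$ term by term. The key computational input is the chain-rule identity
$$\frac{d}{ds}S_{k-1}(Q[h_s])=S^{ij}_{k-1}(Q[h_s])\,Q_{ij}[\dot h_s],$$
which follows from the very definition of $S^{ij}_{k-1}$ as a partial derivative (see Section~\ref{el_symm_functions}) together with the linearity $h\mapsto Q[h]$, so that $\frac{d}{ds}Q[h_s]=Q[\dot h_s]$. Iterating once more yields $\frac{d}{ds}S^{ij}_{k-1}(Q[h_s])=S^{ij,rs}_{k-1}(Q[h_s])\,Q_{rs}[\dot h_s]$. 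Differentiating the first integral in $f_k''(s)$ by the product rule produces one term with $\dddot h_s\,S_{k-1}(Q[h_s])$ and one term with $\ddot h_s\,S^{ij}_{k-1}(Q[h_s])Q_{ij}[\dot h_s]$. Differentiating the second integral produces three contributions, corresponding to which of the three factors $\dot h_s$, $S^{ij}_{k-1}(Q[h_s])$, $Q_{ij}[\dot h_s]$ is being differentiated: the first yields a second copy of $\ddot h_s\,S^{ij}_{k-1}(Q[h_s])Q_{ij}[\dot h_s]$, the second yields the $S^{ij,rs}_{k-1}$ term, and the third yields the term with $Q_{ij}[\ddot h_s]$. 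Collecting the two equal $\ddot h_s$ contributions into the coefficient $2$ gives the asserted formula.

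The only technical point is the legitimacy of differentiating under the integral sign, and this is immediate: $h_s=h e^{s\psi}$ together with its derivatives in $s$ up to order three, and $S_{k-1}$ together with all the relevant first and second partial derivatives, are smooth functions of $(s,x)$ on the compact set $[-2,2]\times\sfe$, yielding uniform bounds. No serious obstacle is anticipated: the argument is disciplined bookkeeping with the product and chain rules, combined with the elementary identities for $S^{ij}_{k-1}$ and $S^{ij,rs}_{k-1}$ already recorded in Section~\ref{el_symm_functions}. The only thing one must be careful about is not to accidentally symmetrize (or assign the extra factor $2$ to) the $S^{ij,rs}_{k-1}$ term, since there only one of the two occurrences of $\dot h_s$ is being differentiated in each of the two corresponding steps.
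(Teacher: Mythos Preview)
Your proposal is correct and follows essentially the same route as the paper: the formulas for $f_k'$ and $f_k''$ are read off from Propositions~\ref{first variation} and~\ref{second variation} together with \eqref{CS}, and $f_k'''$ is obtained by differentiating $f_k''$ directly under the integral sign using the chain-rule identities for $S_{k-1}^{ij}$ and $S_{k-1}^{ij,rs}$. The paper's proof is terser (it cites \cite[Lemma~3.3 and Remark~3.1]{CL} for the third-derivative computation), but the underlying bookkeeping is exactly what you describe.
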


\begin{proof}
The formulas for the first and the second derivatives follow from Propositions \ref{first variation} and \ref{second variation}, and from \eqref{CS} . For the third derivative, the proof is similar to the one in \cite[Lemma 3.3]{CL}:
$$
\begin{aligned}
f_k'''(s)=&\int_{\sfe}\dddot h_s(x)S_{k-1}(Q[h_s](x))\d x + \int_{\sfe}\ddot h_s(x)S^{ij}_{k-1}(Q[h_s](x))Q_{ij}[\dot h_s](x) \d x \\
&+\int_{\sfe}\ddot h_s(x)S^{ij}_{k-1}(Q[h_s](x))Q_{ij}[\dot h_s](x) \d x \\
&+ \int_{\sfe}\dot h_s(x)S_{k-1}^{ij,rs}(Q[h_s](x))Q_{ij}[\dot h_s](x)Q_{rs}[\dot h_s](x)\d x \\ &+\int_{\sfe}\dot h_s(x)S_{k-1}^{ij}(Q[h_s](x))Q_{ij}[\ddot h_s](x)\d x\, ,
\end{aligned}
$$
where we have used \eqref{CS} and \cite[Remark 3.1]{CL}.
\end{proof}

If $K$ is the unit ball, then $h\equiv 1$; consequently
\begin{equation}\label{der h_s}
h_s=e^{s\psi}\, , \quad \dot{h}_s=h_s\psi \, , \quad \ddot{h}_s=h_s\psi^2 \quad \text{and} \quad \dddot{h}_s=h_s\psi^3.
\end{equation}

These formulas, together with Lemma \ref{derivatives}, lead to the following result.

\begin{corollary}\label{Cor_derivate}
Let $K$ be the unit ball. With the notations introduced above we have
\begin{eqnarray*}
f_k(0)&=&\frac{|\sfe|}{k}\binom{n-1}{n-k},\\
f_k'(0)&=&\binom{n-1}{k-1}\int_{\sfe}\psi \d x,\\
f_k''(0)&=&\binom{n-2}{n-k}\left[\frac{(n-1)k}{k-1}\int_{\sfe}\psi^2\d x+\int_{\sfe}\psi\Delta\psi \d x\right]
\end{eqnarray*}
(where $\Delta$ denotes the spherical Laplacian).
\end{corollary}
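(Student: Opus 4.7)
The argument is purely computational: specialize Lemma \ref{derivatives} to $h \equiv 1$ and reduce everything using the identities \eqref{Sr identity}-\eqref{serve}, which evaluate $S_{k-1}$ and its first partial derivatives at the identity matrix. First, since $h\equiv 1$ on $\sfe$, all of its covariant derivatives vanish, so $Q[h] = I_{n-1}$ at $s=0$. By \eqref{Sr identity}, $S_{k-1}(I_{n-1}) = \binom{n-1}{k-1} = \binom{n-1}{n-k}$, and inserting this into \eqref{intrinsic_volume_def} at $s=0$ yields the formula for $f_k(0)$. Similarly, plugging $\dot{h}_0 = \psi$ (from \eqref{der h_s}) into the first-derivative formula of Lemma \ref{derivatives} yields $f_k'(0) = \binom{n-1}{k-1}\int_{\sfe}\psi\, dx$.

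The only step with any algebra is the computation of $f_k''(0)$. Here I invoke \eqref{serve} to write $S^{ij}_{k-1}(I_{n-1}) = \binom{n-2}{k-2}\delta_{ij}$, so the second integral in Lemma \ref{derivatives} contracts $\delta_{ij}$ against $Q_{ij}[\psi] = \psi_{ij} + \psi\delta_{ij}$; the trace of the Hessian piece produces the spherical Laplacian $\Delta\psi$, while the trace of $\psi\delta_{ij}\delta_{ij}$ contributes $(n-1)\psi$. Combined with $\ddot{h}_0 = \psi^2$ in the first integral, this yields
\[
f_k''(0) = \binom{n-1}{k-1}\int_{\sfe}\psi^2\,dx + \binom{n-2}{k-2}\int_{\sfe}\psi\,\Delta\psi\,dx + (n-1)\binom{n-2}{k-2}\int_{\sfe}\psi^2\,dx.
\]
It remains to collect the coefficients of $\int_{\sfe}\psi^2\,dx$. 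Using the binomial identity $\binom{n-1}{k-1} = \frac{n-1}{k-1}\binom{n-2}{k-2}$, one gets
\[
\binom{n-1}{k-1} + (n-1)\binom{n-2}{k-2} = \binom{n-2}{k-2}\Bigl(\frac{n-1}{k-1} + (n-1)\Bigr) = \frac{(n-1)k}{k-1}\binom{n-2}{n-k},
\]
which, after factoring $\binom{n-2}{n-k}$, rearranges exactly into the stated expression for $f_k''(0)$.

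No step presents a real obstacle; the corollary is a direct specialization of Lemma \ref{derivatives} combined with the elementary symmetric function identities \eqref{Sr identity}--\eqref{serve} recorded in Section \ref{el_symm_functions}. The only point worth flagging is the verification of the binomial identity that produces the coefficient $\frac{(n-1)k}{k-1}$, which is a one-line calculation.
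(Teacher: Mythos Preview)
Your proof is correct and follows essentially the same approach as the paper: both specialize Lemma \ref{derivatives} at $h\equiv 1$, invoke \eqref{Sr identity}--\eqref{serve} to evaluate $S_{k-1}$ and $S_{k-1}^{ij}$ at the identity, use \eqref{der h_s} for $\dot h_0=\psi$ and $\ddot h_0=\psi^2$, and then collect the $\int_{\sfe}\psi^2$ terms via the identity $\binom{n-1}{k-1}=\frac{n-1}{k-1}\binom{n-2}{k-2}$.
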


\begin{proof}
First note that, from \eqref{h_s}, $h_0\equiv 1$.  From \eqref{intrinsic_volume_def} we get that
$$
f_k(0)=\frac{1}{k}\int_{\sfe}S_{k-1}(\mathrm{I}_{n-1})\d x=\frac{|\sfe|}{k}\binom{n-1}{k-1}.
$$
Lemma \ref{derivatives}, \eqref{serve} and \eqref{h_s} imply
$$
f_k'(0)=\binom{n-1}{k-1}\int_{\sfe}\psi \d x\, .
$$
Moreover, from Lemma \ref{derivatives}, \eqref{serve} and \eqref{der h_s} we have
\begin{eqnarray*}
f_k''(0)&=&\binom{n-1}{k-1}\int_{\sfe}\psi^2\d x+\binom{n-2}{k-2}\int_{\sfe}\psi\delta_{ij}(\psi_{ij}+\psi\delta_{ij})\d x\\
&=&\frac{(n-2)!}{(k-2)!(n-k)!}\left\{\frac{n-1}{k-1}\int_{\sfe}\psi^2\d x+\int_{\sfe}\psi\Delta\psi dx+(n-1)\int_{\sfe}\psi^2\d x\right\}\\
&=&\binom{n-2}{n-k}\left[\frac{(n-1)k}{k-1}\int_{\sfe}\psi^2\d x+\int_{\sfe}\psi\Delta\psi \d x\right]\, .
\end{eqnarray*}
\end{proof}

\begin{lemma}\label{key_lemma} Let $h\in\scdpo$, and let $\eta_0$ be as in Lemma \ref{meno due due}.  
There exists a constant $C>0$, depending on $h$, $n$ and $k$,  such that if $\psi\in C^2(\sfe)$ and 
\begin{equation}\label{hp_lemma}
\|\psi\|_{C^2(\mathbb{S}^{n-1})}\leq\eta_0, 
\end{equation}
then,  the following estimates
\begin{equation}\label{stima f_k}
\vert f_k(s)\vert\leq C  \, , \quad \text{for all $s\in[-2,2]$};
\end{equation}
\begin{equation}\label{stima f'_k}
\vert f'_k(s)\vert\leq C\Vert\psi\Vert_{C^2(\sfe)} \, , \quad \text{for all $s\in[-2,2]$};
\end{equation}
\begin{equation}\label{stima f''_k}
\vert f''_k(s)\vert\leq  C\left(\Vert\psi\Vert^2_{L^2(\sfe)}+\Vert\nabla\psi\Vert^2_{L^2(\sfe)}\right) \, , \quad \text{for all $s\in[-2,2]$};
\end{equation}
\begin{equation}\label{stima f'''_k}
\vert f'''_k(s)\vert\leq C \Vert\psi\Vert_{C^2(\sfe)}\left(\Vert\psi\Vert^2_{L^2(\sfe)}+\Vert\nabla\psi\Vert^2_{L^2(\sfe)}\right) \, , \quad \text{for all $s\in[-2,2]$};
\end{equation}
hold true.

\end{lemma}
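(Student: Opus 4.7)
The plan is to derive all four estimates by combining uniform ellipticity of $Q[h_s]$ with product-rule expansions of $\dot h_s,\ddot h_s,\dddot h_s$ and a single integration-by-parts lemma. I will set up three preliminary facts. \textbf{(a)} By Lemma \ref{meno due due} and continuity in $s$, the family $\{h_s\}_{s\in[-2,2]}$ is uniformly bounded in $C^2(\sfe)$, and because $h_s\in\scdpo$, the matrices $Q[h_s](x)$ have eigenvalues bounded uniformly above and below away from zero for $(s,x)\in[-2,2]\times\sfe$. Hence the functions $S_{k-1}(Q[h_s])$ and the entries of the cofactor tensors $S_{k-1}^{ij}(Q[h_s])$ and $S_{k-1}^{ij,rs}(Q[h_s])$ are uniformly bounded in sup norm by a constant $C=C(h,n,k)$. \textbf{(b)} From $\dot h_s=h_s\psi$, $\ddot h_s=h_s\psi^2$, $\dddot h_s=h_s\psi^3$ and Leibniz, one gets pointwise bounds
\begin{equation*}
|\dot h_s|\le C|\psi|,\qquad |\ddot h_s|\le C\|\psi\|_\infty|\psi|,\qquad |\dddot h_s|\le C\|\psi\|_\infty^2|\psi|,
\end{equation*}
together with analogous bounds for the first and second covariant derivatives of $\dot h_s,\ddot h_s,\dddot h_s$ in which each loss of a derivative costs at most one factor of $\|\psi\|_{C^2(\sfe)}$. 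In particular $\|\dot h_s\|_{L^2}^2+\|\nabla\dot h_s\|_{L^2}^2\le C(\|\psi\|_{L^2}^2+\|\nabla\psi\|_{L^2}^2)$. \textbf{(c)} The cofactor tensor $S_{k-1}^{ij}(Q[h_s])$ is divergence-free in $j$ (a standard identity underlying Proposition \ref{ibpf}; cf.\ Lemma 2.3 of \cite{CHS}), so that on the closed manifold $\sfe$ one has, for every $\varphi\in C^2(\sfe)$,
\begin{equation*}
\int_{\sfe}\varphi\, S_{k-1}^{ij}(Q[h_s])\,\varphi_{ij}\,\d x=-\int_{\sfe}S_{k-1}^{ij}(Q[h_s])\,\varphi_i\varphi_j\,\d x.
\end{equation*}

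With these in hand, \eqref{stima f_k} and \eqref{stima f'_k} are immediate from Lemma \ref{derivatives} by putting the integrands in sup norm, using (a) and (b). For \eqref{stima f''_k}, the first summand $\int\ddot h_s S_{k-1}(Q[h_s])\,\d x$ is bounded by $C\|\psi\|_{L^2}^2$ directly. The second summand $\int\dot h_s L_k(h_s)\dot h_s\,\d x$ is treated by applying (c) to the second-derivative part of $Q_{ij}[\dot h_s]$, which yields
\begin{equation*}
-\int_{\sfe}S_{k-1}^{ij}(Q[h_s])(\dot h_s)_i(\dot h_s)_j\,\d x+\int_{\sfe}\dot h_s^{\,2}\,S_{k-1}^{ij}(Q[h_s])\delta_{ij}\,\d x,
\end{equation*}
both terms now controlled by $C(\|\dot h_s\|_{L^2}^2+\|\nabla\dot h_s\|_{L^2}^2)\le C(\|\psi\|_{L^2}^2+\|\nabla\psi\|_{L^2}^2)$ via (a) and (b).

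The estimate \eqref{stima f'''_k} is the main technical point: the four summands in $f_k'''(s)$ given by Lemma \ref{derivatives} are all trilinear in $\psi$, and the strategy is to peel off one factor of $\|\psi\|_{C^2(\sfe)}$ in $L^\infty$ and treat the remainder as a quadratic form. The first term $\int\dddot h_s S_{k-1}\,\d x$ is handled directly, since $|\dddot h_s|\le C\|\psi\|_\infty\psi^2$. For the second summand $2\int\ddot h_s\, S_{k-1}^{ij}(Q[h_s])\,Q_{ij}[\dot h_s]\,\d x$, I write $\ddot h_s=(h_s\psi)\cdot\psi$, pull $\|\psi\|_\infty\le\|\psi\|_{C^2}$ out of one factor, and apply self-adjointness of $L_k(h_s)$ plus (c) to the remaining integral, exactly as in the $f_k''$ step; this bounds it by $C\|\psi\|_{C^2}(\|\psi\|_{L^2}^2+\|\nabla\psi\|_{L^2}^2)$. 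The fourth summand $\int\dot h_s\,L_k(h_s)\ddot h_s\,\d x$ is symmetric to the second by self-adjointness and is handled identically. The third summand $\int\dot h_s\, S_{k-1}^{ij,rs}(Q[h_s])\,Q_{ij}[\dot h_s]\,Q_{rs}[\dot h_s]\,\d x$ is the most delicate: I factor out $\|\dot h_s\|_\infty\le C\|\psi\|_{C^2}$, and then integrate by parts using the divergence-free character of the second cofactor tensor in each pair of indices (which underlies \eqref{parts_bis}), transferring one of the second covariant derivatives among the $Q$'s onto the other factor, to obtain an expression bounded by the quadratic form in $\dot h_s$ and $\nabla\dot h_s$ already encountered. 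The principal obstacle I anticipate is the careful bookkeeping of repeated indices in this last term and the exact form of the divergence identity for $S_{k-1}^{ij,rs}(Q[h_s])$; once that is laid out, the estimates assemble routinely.
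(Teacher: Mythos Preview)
Your approach is essentially the paper's: uniform $C^2$ bounds on $h_s$ yield uniform sup bounds on $S_{k-1}$, $S_{k-1}^{ij}$, $S_{k-1}^{ij,rs}$, and the divergence-free property of the cofactor tensors converts second-order integrals into first-order ones that are controlled by $\|\psi\|_{L^2}^2+\|\nabla\psi\|_{L^2}^2$.

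One point of order deserves correction. In your treatment of the second, third and fourth summands of $f_k'''$, you propose to ``pull out $\|\psi\|_\infty$'' (or $\|\dot h_s\|_\infty$) \emph{first} and then apply (c) or \eqref{parts_bis} to ``the remaining integral.'' That does not work: once you pass to absolute values to extract an $L^\infty$ factor, there is no longer a signed integral on which to integrate by parts. The paper reverses the order. For the third summand, for instance, one keeps $\dot h_s$ in front, splits $Q_{ij}[\dot h_s]=(\dot h_s)_{ij}+\dot h_s\delta_{ij}$, integrates the $(\dot h_s)_{ij}$ part by parts using that $S_{k-1}^{ij,rs}(Q[h_s])Q_{rs}[\dot h_s]$ is divergence-free in $j$, and \emph{only then} extracts the needed $\|\psi\|_{C^2}$ factor---not from the scalar $\dot h_s$ in front, but from the bound $\bigl|S_{k-1}^{ij,rs}(Q[h_s])Q_{rs}[\dot h_s]\bigr|\le C\|\dot h_s\|_{C^2}\le C\|\psi\|_{C^2}$. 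The second and fourth summands are handled the same way (the paper also uses \eqref{parts} to rewrite the fourth summand in the form of the second before estimating). With this reordering your plan coincides with the paper's proof.
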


\begin{proof}
The proof is similar to the one of Lemma 3.5 in \cite{CL}.  Throughout the proof, $C$ denotes a positive constant  depending on $h$, $n$ and $k$.

We firstly observe that, since \eqref{hp_lemma} is in force, there exists $C>0$ such that
$$
\|h_s\|_{C^2(\mathbb{S}^{n-1})}\leq C\, , \quad \text{for all $s\in[-2,2]$}.
$$
Then,
\begin{equation}\label{aggiunta}
\vert h_s(x)S_{k-1}(Q[h_s](x))\vert\leq C\, , \quad \text{for all $s\in[-2,2]$};
\end{equation}
and we immediately deduce \eqref{stima f_k}.

Now we prove \eqref{stima f'_k}. From Lemma \ref{derivatives} we get
$$
|f_k'(s)|=\left|\int_{\sfe}\dot h_s(x)S_{k-1}(Q[h_s](x))\d x\right| =\left|\int_{\sfe}\psi(x) h_s(x)S_{k-1}(Q[h_s](x))\d x\right|,
$$
and so from \eqref{aggiunta} we obtain the desired estimate \eqref{stima f'_k}. 

Let us now prove \eqref{stima f''_k}.  Again, from Lemma \ref{derivatives} and the integration by parts formula \eqref{parts}, we have 
$$
\begin{aligned}
|f_k''(s)|\leq & \left|\int_{\sfe}\psi^2(x) h_s(x)S_{k-1}(Q[h_s](x))\d x \right|+ \left|\int_{\sfe}\psi(x) h_s(x)S^{ij}_{k-1}(Q[h_s](x))Q_{ij}[\psi h_s](x) \d x\right|
\\
\leq &  C \|\psi\|^2_{L^2(\sfe)} +\left|\int_{\sfe}S^{ij}_{k-1}(Q[h_s](x))(\psi h_s)_{i}(x)(\psi h_s)_{j}(x) \d x\right|\\
\leq& C \|\psi\|^2_{L^2(\sfe)}+C\|\nabla \psi\|^2_{L^2(\sfe)},
\end{aligned}
$$
hence we have the bound in \eqref{stima f''_k}. 

Finally, we show that \eqref{stima f'''_k} holds. From Lemma \ref{derivatives} we get 
$$
\begin{aligned}
|f_k'''(s)|\leq&\left|\int_{\sfe}h_s(x)\psi^3(x)S_{k-1}(Q[h_s](x))\d x\right| \\
& + 2\left|\int_{\sfe}h_s(x)\psi^2(x)S_{k-1}^{ij}(Q[h_s](x))Q_{ij}[h_s\psi](x) \d x\right| \\
&+ \left|\int_{\sfe}h_s(x)\psi(x)S_{k-1}^{ij,rs}(Q[h_s](x))Q_{ij}[h_s\psi](x)Q_{rs}[h_s\psi](x)\d x\right| \\ &+\left|\int_{\sfe}h_s(x)\psi(x)S_{k-1}^{ij}(Q[h_s](x))Q_{ij}[h_s\psi^2](x)\d x\right|.
\end{aligned}
$$
Using formula \eqref{parts} from Proposition \ref{ibpf}, we get
$$ \int_{\sfe}h_s(x)\psi(x)S_{k-1}^{ij}(Q[h_s](x))Q_{ij}[h_s\psi^2](x)\d x=\int_{\sfe}h_s(x)\psi^2(x)S_{k-1}^{ij}(Q[h_s](x))Q_{ij}[h_s\psi](x)\d x, $$
thus
$$
\begin{aligned}
|f_k'''(s)|\leq & C \|\psi\|_{C^2(\sfe)} \left|\int_{\sfe}h_s(x)\psi^2(x)S_{k-1}(Q[h_s](x))\d x\right|\\ 
&+ C \|\psi\|_{C^2(\sfe)}  \left|\int_{\sfe}h_s(x)\psi(x)S_{k-1}^{ij}(Q[h_s](x))Q_{ij}[h_s\psi](x) \d x\right| \\ 
&+ \left|\int_{\sfe}h_s(x)\psi(x)S_{k-1}^{ij,rs}(Q[h_s](x))Q_{ij}[h_s\psi](x)Q_{rs}[h_s\psi](x)\d x\right| \\
&+\left|\int_{\sfe}h_s(x)\psi^2(x)S_{k-1}^{ij}(Q[h_s](x))Q_{ij}[h_s\psi](x)\d x\right|. 
\end{aligned}
$$
Now, arguing as we did before, we have that
$$
\begin{aligned}
|f_k'''(s)| \leq & C \|\psi\|_{C^2(\sfe)}  \|\psi\|^2_{L^2(\sfe)} + C \|\psi\|_{C^2(\sfe)}  \|\nabla \psi\|^2_{L^2(\sfe)} \\
&+ \left|\int_{\sfe}h_s(x)\psi(x)S_{k-1}^{ij,rs}(Q[h_s](x))Q_{ij}[h_s\psi](x)Q_{rs}[h_s\psi](x)\d x\right| \\
& +C\|\psi\|_{C^2(\sfe)}\|\nabla\psi\|^2_{L^2(\sfe)}  \, .
\end{aligned}
$$
The third term can be estimated, arguing as before,  in the following way:
$$
\begin{aligned}
&\left|\int_{\sfe}h_s(x)\psi(x)S_{k-1}^{ij,rs}(Q[h_s](x))Q_{ij}[h_s\psi](x)Q_{rs}[h_s\psi](x)\d x\right| \\
\leq & \left|\int_{\sfe}h_s^2(x)\psi^2(x)S_{k-1}^{ij,rs}(Q[h_s](x))Q_{rs}[h_s\psi](x)\delta_{ij}\d x\right| \\ 
&+ \left|\int_{\sfe}h_s(x)\psi(x)S_{k-1}^{ij,rs}(Q[h_s](x))Q_{rs}[h_s\psi](x)(\psi h_s)_{ij}(x)\d x\right| \\
\leq & C \|\psi\|_{C^2(\sfe)}\|\psi\|^2_{L^2(\sfe)} + C \left|\int_{\sfe}S_{k-1}^{ij,rs}(Q[h_s](x))Q_{rs}[h_s\psi](x)(\psi h_s)_{j}(x)(\psi h_s)_{i}(x)\d x\right| \\
\leq &  C \|\psi\|_{C^2(\sfe)}\|\psi\|^2_{L^2(\sfe)} + C \|\psi\|_{C^2(\sfe)}\|\nabla \psi\|^2_{L^2(\sfe)},
\end{aligned}
$$
where we used the definition of $Q_{ij}[h_s\psi_s]$ and the integration by parts formula \eqref{parts_bis}.  This concludes the proof of \eqref{stima f'''_k}, hence the proof of the lemma.

\end{proof}

\subsubsection{Proof of Theorems \ref{princ due} and \ref{princ_uno_nuovo}}

We need one last lemma.

\begin{lemma}\label{lemma concava} 
Let $h\colon\sfe\to\R$, $h\equiv1$ and let $\psi\in C^2(\sfe)$ be an even function such that
$$
\|\psi\|_{C^2(\sfe)}\le\eta_0,
$$
where $\eta_0$ is given by Lemma \ref{meno due due}. Let $f_k\colon[-2,2]\to\R$ be defined by \eqref{intrinsic_volume_def}, where $k\in\{2,\dots,n\}$. There exists a constant $\eta>0$, $\eta\le\eta_0$, such that if
\begin{equation}\label{C_2 norm}
\|\psi\|_{C^2(\mathbb{S}^{n-1})}\leq\eta\, ,
\end{equation}
then the function 
$$
s\;\mapsto\; \log f_k(s) \quad \text{is concave in $[-2,2]$}\, .
$$
Moreover, the function is strictly concave, unless $\psi$ is a constant. 
\end{lemma}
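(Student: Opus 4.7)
The strategy is to show $(\log f_k)''(s) \leq 0$ on $[-2,2]$ by combining an explicit, strictly negative evaluation at $s=0$ with a remainder bound controlled by $\|\psi\|_{C^2}$. As a preliminary reduction, observe that replacing $\psi$ by $\psi + c$ multiplies $h_s = e^{s\psi}$ by $e^{sc}$ and hence, by the $k$-homogeneity of $V_k$, multiplies $f_k(s)$ by $e^{skc}$; this changes $\log f_k$ only by the affine function $s \mapsto skc$ and leaves $(\log f_k)''(s)$ invariant. We may therefore normalize $\int_{\sfe}\psi\,\mathrm{d}x = 0$.

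Under this normalization Corollary \ref{Cor_derivate} gives $f_k'(0) = 0$ and, after using $\int_{\sfe}\psi\Delta\psi\,\mathrm{d}x = -\int_{\sfe}|\nabla\psi|^2\,\mathrm{d}x$,
\begin{equation*}
(\log f_k)''(0) = \frac{k}{|\sfe|}\left(k\int_{\sfe}\psi^2\,\mathrm{d}x - \frac{k-1}{n-1}\int_{\sfe}|\nabla\psi|^2\,\mathrm{d}x\right).
\end{equation*}
Since $\psi$ is even with zero mean, Proposition \ref{P pari} yields $\int_{\sfe}\psi^2 \leq \frac{1}{2n}\int_{\sfe}|\nabla\psi|^2$. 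Substituting this into the display gives
\begin{equation*}
(\log f_k)''(0) \leq \frac{k\bigl(2n - k(n+1)\bigr)}{2n(n-1)|\sfe|}\int_{\sfe}|\nabla\psi|^2\,\mathrm{d}x,
\end{equation*}
and the prefactor is strictly negative for every $k \geq 2$, since $k(n+1) \geq 2(n+1) > 2n$. Applying Poincar\'e once more yields $\|\psi\|_{L^2}^2 + \|\nabla\psi\|_{L^2}^2 \leq \frac{2n+1}{2n}\|\nabla\psi\|_{L^2}^2$, hence
\begin{equation*}
(\log f_k)''(0) \leq -c_0\bigl(\|\psi\|_{L^2(\sfe)}^2 + \|\nabla\psi\|_{L^2(\sfe)}^2\bigr), \qquad c_0 = c_0(n,k) > 0.
\end{equation*}

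For the remainder one uses
\begin{equation*}
(\log f_k)''' = \frac{f_k'''}{f_k} - 3\frac{f_k' f_k''}{f_k^2} + 2\frac{(f_k')^3}{f_k^3},
\end{equation*}
together with the estimates of Lemma \ref{key_lemma}, the fact that $f_k(s)$ stays bounded below by a positive constant for $\|\psi\|_{C^2}$ small, and the tighter bound $|f_k'(s)| \leq C\|\psi\|_{L^2(\sfe)}$, which follows at once from $f_k'(s) = \int_{\sfe}\psi\, h_s S_{k-1}(Q[h_s])\,\mathrm{d}x$ and Cauchy--Schwarz, using $\|h_s S_{k-1}(Q[h_s])\|_{L^\infty}\leq C$. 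Each of the three terms is then dominated by $C\|\psi\|_{C^2}\bigl(\|\psi\|_{L^2}^2 + \|\nabla\psi\|_{L^2}^2\bigr)$, using in the cubic term $\|\psi\|_{L^2} \leq C\|\psi\|_{C^2}$, so that
\begin{equation*}
\sup_{s \in [-2,2]}|(\log f_k)'''(s)| \leq C_1 \|\psi\|_{C^2(\sfe)}\bigl(\|\psi\|_{L^2(\sfe)}^2 + \|\nabla\psi\|_{L^2(\sfe)}^2\bigr).
\end{equation*}
By the fundamental theorem of calculus, for every $s \in [-2,2]$,
\begin{equation*}
(\log f_k)''(s) \leq (\log f_k)''(0) + 2\sup_\tau|(\log f_k)'''(\tau)| \leq \bigl(-c_0 + 2C_1\|\psi\|_{C^2(\sfe)}\bigr)\bigl(\|\psi\|_{L^2(\sfe)}^2 + \|\nabla\psi\|_{L^2(\sfe)}^2\bigr).
\end{equation*}
Choosing $\eta := \min\{\eta_0,\, c_0/(4C_1)\}$ and assuming $\|\psi\|_{C^2(\sfe)} \leq \eta$ forces the right-hand side to be $\leq 0$, with strict inequality unless the mean-zero part of $\psi$ vanishes, which, combined with the initial reduction, is equivalent to $\psi$ being constant.

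The main obstacle is obtaining the right sign of the second variation at $s=0$: the standard Poincar\'e inequality on $\sfe$ has constant $1/(n-1)$, which would give $(\log f_k)''(0) \geq 0$ and is useless here. The evenness assumption on $\psi$ (equivalent to the central symmetry of $K$) is therefore essential, as it activates the improved constant $1/(2n)$ of \eqref{Poincare}; without it, the second variation at the unit ball fails to be negative definite and the whole perturbation strategy collapses. Once this is in place, the remainder, of order $\|\psi\|_{C^2}\bigl(\|\psi\|_{L^2}^2+\|\nabla\psi\|_{L^2}^2\bigr)$, is of strictly higher order than the negative quadratic main term, and the argument closes for $\|\psi\|_{C^2}$ sufficiently small.
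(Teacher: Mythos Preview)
Your proof is correct and follows essentially the same strategy as the paper's: reduce to the mean-zero case via the affine invariance of $(\log f_k)''$ under $\psi\mapsto\psi+c$, use Corollary~\ref{Cor_derivate} together with the sharpened Poincar\'e inequality \eqref{Poincare} to show the second variation at $s=0$ is strictly negative (of order $\|\nabla\psi\|_{L^2}^2$), and then absorb the remainder using the cubic-order estimates of Lemma~\ref{key_lemma}. The only cosmetic difference is that the paper works with the numerator $H(s)=f_k(s)f_k''(s)-(f_k'(s))^2$ and bounds $H'$ via the mean value theorem, whereas you work directly with $(\log f_k)''$ and bound $(\log f_k)'''$; your route requires the extra (easy) observation that $f_k(s)$ stays bounded away from zero, and the sharpening $|f_k'(s)|\le C\|\psi\|_{L^2}$, both of which you handle correctly.
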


\begin{proof} We start by computing 
$$
(\log f_k)'=\dfrac{f_k'}{f_k}\, 
$$
and 
$$
(\log f_k)''=\dfrac{f_k''f_k-(f_k')^2}{f_k^2}\, .
$$
We show that
$$
H(s):=f_k(s)f_k''(s)-(f_k'(s))^2<0
$$
for all $s\in[-2,2]$,  provided $\Vert\psi\Vert_{C^2(\sfe)}\leq\eta$ and $\psi$ is not constant.
We have 
$$
H(0)=f_k(0)f_k''(0)-(f_k')^2(0)\, .
$$
First we assume that  
\begin{equation}\label{media_nulla}
\int_{\sfe}\psi\d x=0 \, .
\end{equation}
According to Corollary \ref{Cor_derivate}, we have that
$$
f_k(0)=\frac{|\sfe|}{k}\binom{n-1}{n-k}
$$
and 
$$
f_k'(0)=\binom{n-1}{k-1}\int_{\sfe}\psi\d x= 0\, . 
$$
Moreover,
\begin{align*}
f_k''(0)&=\binom{n-2}{n-k}\left[\frac{(n-1)k}{k-1}\int_{\sfe}\psi^2\d x+\int_{\sfe}\psi\Delta\psi \d x\right]\\
&=\binom{n-2}{n-k}\left[\frac{(n-1)k}{k-1}\int_{\sfe}\psi^2\d x-\int_{\sfe}\|\nabla\psi\|^2 \d x\right]
\end{align*}
Now,  since \eqref{media_nulla} is in force and $\psi$ is even, from Proposition \ref{P pari} we get 
\begin{equation*}
f_k''(0)\leq\binom{n-2}{n-k}\left[\frac{(n-1)k}{2n(k-1)}-1\right]\int_{\sfe}\|\nabla\psi\|^2 \d x\, .
\end{equation*}
As 
$$
\frac{(n-1)k}{2n(k-1)}<1\, , 
$$
we may write
$$
H(0)\leq-\gamma\,\|\nabla\psi\|_{L^2(\sfe)}^2,
$$
where $\gamma>0$ depends on $n$ and $k$. Now,  for every $s\in [-2,2]$ there exists $\bar{s}$ between $0$ and $s$ such that
$$
H(s)=H(0)+sH'(\bar{s})=H(0)+s\left[f_k(\bar{s})f'''_k(\bar{s})-f'_k(\bar{s})f''_k(\bar{s})\right]\, ;
$$
from Lemma \ref{key_lemma},  we know that \eqref{stima f_k}, \eqref{stima f'_k}, \eqref{stima f''_k} and \eqref{stima f'''_k} hold true,  hence 
$$
\vert sH'(\bar{s})\vert\leq C\eta \left(\Vert\psi\Vert^2_{L^2(\sfe)}+\Vert\nabla\psi\Vert^2_{L^2(\sfe)}\right)
\le C\eta \|\nabla\psi\|_{L^2(\sfe)}^2\,,
$$
where we used Proposition \ref{P pari} again. We have then proved the concavity of $f$, and the strict concavity whenever $\|\nabla\psi\|_{L^2(\sfe)}>0$, i.e. whenever $\psi$ is not a constant. 

Now we drop the assumption \eqref{media_nulla}.  Given $\psi\in C^2(\sfe)$,  let
\begin{equation}\label{funzione_media_nulla}
m_\psi=\frac1{|\sfe|}\int_{\sfe}\psi \d x\quad\mbox{and}\quad
\bar\psi=\psi-m_\psi.
\end{equation}
Clearly $\bar{\psi}\in C^2(\sfe)$ and $\bar{\psi}$ verifies \eqref{media_nulla}. Moreover
$$
\|\bar{\psi}\|_{C^2(\mathbb{S}^{n-1})}\leq \|\psi\|_{C^2(\mathbb{S}^{n-1})} + \vert m_\psi \vert\leq 2 \|\psi\|_{C^2(\mathbb{S}^{n-1})}   \, ,
$$
hence if $ \|\psi\|_{C^2(\mathbb{S}^{n-1})}\leq \eta_0/2$ then $\bar{\psi}$ satisfies \eqref{C_2 norm}. Since 
$$
\bar{h}_s:=e^{s\bar{\psi}}=e^{s(\psi-m_\psi)}=e^{-sm_\psi} h_s,
$$
we have
$$
Q[\bar{h}_s]=e^{-sm_\psi} Q[h_s] \quad \text{and} \quad S_{k-1}(Q[\bar{h}_s](x))=e^{-(k-1)sm_\psi}S_{k-1}(Q[h_s](x)),
$$
thus
\begin{equation}\label{intrinsic_volume}
\bar{f}_k(s):=\frac{1}{k}\int_{\sfe}\bar{h}_s(x)S_{k-1}(Q[\bar{h}_s](x))\d x=e^{-ksm_\psi}f_k(s)\, .
\end{equation}
We conclude that $\log \bar{f}_k$ and $\log f_k$ differ by a linear term, and the concavity (resp. the strict concavity) of $\bar{f}_k$ is equivalent to that of $f_k$.  On the other hand, by the first part of the proof $\log \bar{f}_k$ is concave (and strictly concave unless $\psi$ is constant), as long as $\|\bar{\psi}\|_{C^2(\mathbb{S}^{n-1})}$ is sufficiently small, and this condition is verified if $\|\psi\|_{C^2(\mathbb{S}^{n-1})}$ is sufficiently small.  

Finally, note that, by \eqref{h_s}, if $\psi=\psi_0$ is constant then $h_s=e^{\psi_0 s}$. Consequently 
$$
f_k(s)=c e^{k\psi_0 s},\quad c>0,
$$ 
whence $\log f_k(s)$ is linear. 

The proof of the lemma is complete.
\end{proof}

\begin{proof}[Proof of Theorem \ref{princ due}.]
Let $\eta>0$ be as in Lemma \ref{lemma concava}, and let $K\in \K^n_{0,s}$ be of class $C^{2,+}$ and such that 
\begin{equation}
\|1-h\|_{C^2(\sfe)}\le\eta,
\end{equation}
where $h$ is the support function of $K$.  This implies that $h>0$ on $\sfe$, and therefore we can set $\psi=\log h\in C^2(\sfe)$; thus we may write $h$ in the form $h=e^\psi$. Define, for $t\in[0,1]$,
$$
K_t=(1-t)\cdot B_n+_0 t\cdot K,
$$
and let $h_t$ be the support function of $K_t$; then
$$
h_t=1^{1-t}h^t=e^{t\psi}.
$$
Hence $V_k(K_t)$ is concave in $[-2,2]$, which proves \eqref{BM-0}. Moreover, $V_k(K_t)$ is strictly concave unless $\psi$ is constant, and the latter condition is equivalent to say that $h$ is constant, i.e. $K$ is a ball centered at the origin. 
\end{proof}

The following remark will be useful for the proof of Theorem \ref{princ_uno_nuovo}.

\begin{remark}\label{rmkBM0-scaling}
	{If two convex bodies $K_0$ and $K_1$ satisfy the log-Brunn-Minkowski inequality
		\begin{equation}\label{foot2}
		V_k((1-t)\cdot K_0+_0 t\cdot K_1)\ge V_k(K_0)^{1-t}\ V_k(K_1)^{t},  \quad \text{for all $t\in [0,1]$}\,, 
		\end{equation}
		then $\alpha K_0$ and $\beta K_1$ satisfy the same log-Brunn-Minkowski inequality, for $\alpha, \beta>0$.
		
		Indeed, we consider the convex bodies $\alpha K_0$ and $\beta K_1$; from the definition of $0$-sum we have
		\begin{equation}\label{foot1}
		(1-t)\cdot \alpha K_0+_0 t\cdot \beta K_1=K[(\alpha h_0)^{1-t}(\beta h_1)^{t}]=\alpha^{1-t} \beta^{t} K[ h_0^{1-t} h_1^{t}]=\alpha^{1-t} \beta^{t} [(1-t)\cdot  K_0+_0 t\cdot K_1] \, ,
		\end{equation}
		where $h_0$ and $h_1$ denote the support functions of $K_0$ and $K_1$, respectively.  Hence, from the fact that $V_k$ is $k$-homogeneous, \eqref{foot1} and \eqref{foot2} we obtain   
		\begin{align*}
		V_k((1-t)\cdot \alpha K_0+_0 t\cdot \beta K_1)&=\alpha^{(1-t)k} \beta^{tk} V_k((1-t)\cdot  K_0+_0 t\cdot K_1) \\ 
		&\geq \alpha^{(1-t)k} \beta^{tk}V_k(K_0)^{1-t}\ V_k(K_1)^{t} \\
		&=  (\alpha^{k} V_k(K_0))^{1-t}\   (\beta^{k} V_k(K_1))^{t} \\
		&= V_k(\alpha K_0)^{1-t}\    V_k(\beta K_1)^{t} \, ,
		\end{align*}
		i.e.  $\alpha K_0$ and $\beta K_1$ satisfy the log-Brunn-Minkowski inequality \eqref{foot2} too. By the previous argument, it is clear that we have equality in the inequality for $K_0$ and $K_1$ if and only if we have equality in the inequality for $\alpha K_0$ and $\beta K_1$.
	}
\end{remark}

\begin{proof}[Proof of Theorem \ref{princ_uno_nuovo}]

Let $B_n$ and $K$ be as in the statement of Theorem \ref{princ_uno_nuovo}. We define 
$$
\tilde{B_n}:=\dfrac{1}{V_k(B_n)^{1/k}} B_n \quad \text{and} \quad \tilde{K}:=\dfrac{1}{V_k(K)^{1/k}} K \, ;
$$
observe that
\begin{equation}\label{volum=1}
V_k(\tilde{B_n})=1=V_k(\tilde{K})\, ,
\end{equation}
since $V_k$ is $k$-homogeneous.  Because of Theorem \ref{princ due}, $B_n$ and $K$ satisfy the log-Brunn-Minkowski inequality
$$
V_k((1-t)\cdot B_n+_0 t\cdot K)\ge V_k(B_n)^{1-t}\ V_k(K)^{t}\, , \quad \text{for all $t\in [0,1]$}\,.
$$
Thanks to Remark \ref{rmkBM0-scaling}, $\tilde{B_n}$ and $\tilde{K}$ satisfy the same log-Brunn-Minkowski inequality:
\begin{equation}\label{BM_tilde}
V_k((1-t)\cdot \tilde{B_n}+_0 t\cdot \tilde{K})\ge V_k(\tilde{B_n})^{1-t}\ V_k(\tilde{K})^{t}\, , \quad \text{for all $t\in [0,1]$}\,.
\end{equation}
Now,  for every $t\in [0,1]$ we define 
$$
\tilde{t}:=\dfrac{t V_k(K)^{p/k}}{(1-t)V_k(B_n)^{p/k}+tV_k(K)^{p/k}}\, .
$$
Clearly $\tilde{t}\in [0,1]$ and 
$$
1-\tilde{t}=\dfrac{(1-t) V_k(B_n)^{p/k}}{(1-t)V_k(B_n)^{p/k}+tV_k(K)^{p/k}}\, ,
$$
hence, applying \eqref{BM_tilde} with $t=\tilde{t}$, we find
$$
V_k((1-\tilde{t})\cdot \tilde{B_n}+_0 \tilde{t}\cdot \tilde{K})\ge V_k(\tilde{B_n})^{1-\tilde{t}}\ V_k(\tilde{K})^{\tilde{t}}=1\, , 
$$
where we have used \eqref{volum=1}.  On the other hand,
\begin{align*}
&V_k((1-\tilde{t})\cdot \tilde{B_n}+_0 \tilde{t}\cdot \tilde{K}) \\ 
&=V_k\left(\dfrac{(1-t) V_k(B_n)^{p/k}}{(1-t)V_k(B_n)^{p/k}+tV_k(K)^{p/k}} \cdot \dfrac{1}{V_k(B_n)^{1/k}} B_n +_0 \dfrac{t V_k(K)^{p/k}}{(1-t)V_k(B_n)^{p/k}+tV_k(K)^{p/k}} \cdot \dfrac{1}{V_k(K)^{1/k}} K\right) \\
& =V_k\left(\dfrac{(1-t) V_k(B_n)^{p/k}}{(1-t)V_k(B_n)^{p/k}+tV_k(K)^{p/k}}  \dfrac{1}{V_k(B_n)^{p/k}}\cdot B_n +_0 \dfrac{t V_k(K)^{p/k}}{(1-t)V_k(B_n)^{p/k}+tV_k(K)^{p/k}} \dfrac{1}{V_k(K)^{p/k}} \cdot K\right) \\
&= V_k\left(\dfrac{(1-t)}{(1-t)V_k(B_n)^{p/k}+tV_k(K)^{p/k}} \cdot B_n +_0 \dfrac{t}{(1-t)V_k(B_n)^{p/k}+tV_k(K)^{p/k}} \cdot K\right) \\
&=\dfrac{1}{((1-t)V_k(B_n)^{p/k}+tV_k(K)^{p/k})^{k/p}}V_k\left((1-t) \cdot B_n +_0 t \cdot K\right),
\end{align*}
where we used the fact that 
$$
V_k(\lambda\cdot K)=V_k(\lambda^{1/p}K)=\lambda^{k/p}V_k(K), \quad \text{for all $\lambda>0$} \, .
$$
Summing up, we have that 
$$
V_k\left((1-t) \cdot B_n +_0 t \cdot K\right) \geq ((1-t)V_k(B_n)^{p/k}+tV_k(K)^{p/k})^{k/p}\, .
$$
The conclusion now follows from the inclusion \eqref{inclusion_bis},
which gives
\begin{equation}\label{last}
V_k\left((1-t) \cdot B_n +_p t \cdot K\right) \geq V_k\left((1-t) \cdot B_n +_0 t \cdot K\right) \geq ((1-t)V_k(B_n)^{p/k}+tV_k(K)^{p/k})^{k/p} \, .
\end{equation}

Assuming that equality holds in \eqref{last}, we see that we have equality in \eqref{BM_tilde} as well. By Remark \ref{rmkBM0-scaling} and by the discussion of equality conditions int Theorem \ref{princ due}, we obtain that $K$ has to nomothetic to $B_n$. The vice versa of this statement follows from homogeneity.
\end{proof}

\subsection{The case $k=1$}

In this subsection we prove Theorem \ref{princ tre}. 

\begin{proof}[Proof of Theorem \ref{princ tre}]
Let $K_0, K_1\in\K^n_{0}$ and $p\in [0,1)$.
For all $t\in [0,1]$, 
$$
V_1((1-t)\cdot K_0+_p t\cdot K_1)\leq V_1((1-t) K_0+ t K_1)= (1-t)V_1(K_0)+ t V_1 (K_1) \, ,
$$
where we have used \eqref{inclusion_bis} and the fact that $V_1(\cdot)$ is linear.  So, we get
\begin{equation}\label{BM-max}
V_1((1-t)\cdot K_0+_p t\cdot K_1)\leq \max \{ V_1(K_0), V_1(K_0)\}\, , \quad \text{for all $t\in [0,1]$} \, .
\end{equation}
The argument needed in order to obtain \eqref{BM-1} from \eqref{BM-max} is standard, but for the sake of completeness we report it. First, we may assume that the dimension of $K_0$ and $K_1$ is at least one. Indeed, if at least one of them has only one point, then it coincides with $\{0\}$ and the inequality that we want to prove follows immediately. 

We consider, similarly to what we did before,
$$
\tilde{K_0}:=\dfrac{1}{V_1(K_0)}K_0 \quad \text{and} \quad \tilde{K_1}:=\dfrac{1}{V_1(K_1)}K_1 \, , 
$$
and we observe that 
$$
V_1(\tilde{K_0})=1=V_1(\tilde{K_1})\, .
$$
Moreover,  for every $t\in [0,1]$ we define 
$$
\tilde{t}:=\dfrac{t V_1(K_1)^{p}}{(1-t)V_1(K_0)^{p}+tV_1(K_1)^{p}}\, .
$$
Applying \eqref{BM-max} with $K_0=\tilde{K_0}$,  $K_1=\tilde{K_1}$ and $t=\tilde{t}$, we obtain 
$$
V_1((1-\tilde{t})\cdot \tilde{K_0}+_p \tilde{t}\cdot \tilde{K_1})\leq 1 \, .
$$
Inequality \eqref{BM-1} follows from homogeneity.

Let us now discuss equality conditions. It is straightforward to verify that if $K_0$ and $K_1$ coincide up to a dilation, or at least one of them coincides with $\{0\}$, we have equality. Vice versa, assume that equality holds, for some $K_0$, $K_1$ and $t$, and that both $K_0$ and $K_1$ are not the set $\{0\}$. Then $V_1(K_0)>0$ and $V_1(K_1)>0$, and by the same procedure used in the first part of this proof, we may find $\tilde K_0, \tilde K_1\in\K^n_0$ and $\tilde t\in[0,1]$ such that
\begin{eqnarray*}
&&\tilde K_0=\alpha_0 K_0,\quad \tilde K_1=\alpha_1 K_1, \quad\mbox{with $\alpha_0,\alpha_1>0$},\\
&&V_1(\tilde K_0)=V_1(\tilde K_1)=1,\quad V_1((1-\tilde t)\cdot\tilde K_0+_p\tilde t\cdot\tilde K_1)=1.
\end{eqnarray*}
Then
\begin{eqnarray*}
1=V_1((1-\tilde t)\cdot\tilde K_0+_p\tilde t\cdot\tilde K_1)
\le V_1((1-\tilde t)\tilde K_0+\tilde t\tilde K_1)=(1-\tilde t)V_1(\tilde K_0)+\tilde tV_1(\tilde K_1)=1.
\end{eqnarray*}
In particular
$$
V_1((1-\tilde t)\cdot\tilde K_0+_p\tilde t\cdot\tilde K_1)
=V_1((1-\tilde t)\tilde K_0+\tilde t\tilde K_1),
$$
and, thanks to \eqref{inclusion_bis}, we also have the inclusion
$$
(1-\tilde t)\cdot\tilde K_0+_p\tilde t\cdot\tilde K_1\subseteq(1-\tilde t)\tilde K_0+\tilde t\tilde K_1.
$$
By the monotonicity of $V_1$, this implies that 
$$
(1-\tilde t)\cdot\tilde K_0+_p\tilde t\cdot\tilde K_1=(1-\tilde t)\tilde K_0+\tilde t\tilde K_1,
$$
and therefore
$$
\left[(1-\tilde t)h_{\tilde K_0}^p+\tilde t h_{\tilde K_1}^p\right]^{1/p}=
(1-\tilde t)h_{\tilde K_0}+\tilde t h_{\tilde K_1}
$$
(here we are assuming $0<p<1$, but the argument for $p=0$ needs only minor modifications). From the characterization of equality conditions in the inequality between the $p$-mean and the arithmetic mean, we deduce that $h_{\tilde K_0}=h_{\tilde K_1}$. This implies that $\tilde K_0=\tilde K_1$, and then $K_0$ and $K_1$ have to be nomothetic. 
\end{proof}

%

\section{Local uniqueness for the $L_p$ Christoffel-Minkowski problem}\label{proof teo uniqueness}

In this section, we prove Theorem \ref{teo uniqueness}. We will need the following preliminary result.

\begin{lemma}\label{non si voleva} Let $p\in(0,1)$. There exists $\eta>0$ such that if $K\in\K^n_{0,s}$ is of class $C^{2,+}$ and 
$$
\|1-h_K\|_{C^2(\sfe)}\le \eta,
$$
then for every $t\in[0,1]$ the function $h_t\colon\sfe\to\R$ defined by
$$
h_t=[(1-t)+t h_K^p]^{1/p}
$$ 
is the support function of a convex body $K_t\in\K^n_{0,s}$, of class $C^{2,+}$. 
\end{lemma}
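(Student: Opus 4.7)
The plan is to verify the two characterising conditions of Proposition \ref{caratterizzazione funzioni supporto} for $h_t$, uniformly in $t\in[0,1]$, by exploiting continuity with respect to $h_K$ near the constant function $1$.

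First I would fix $\eta\in(0,1/2)$ small enough that $\|1-h_K\|_{C^2(\sfe)}\le\eta$ implies $h_K\ge 1/2$ on $\sfe$; on this range the function $F_t(z)=[(1-t)+tz^p]^{1/p}$ is $C^\infty$ jointly in $(t,z)\in[0,1]\times[1/2,\infty)$, so the composition $h_t=F_t\circ h_K$ belongs to $C^2(\sfe)$. Moreover, since $F_t(1)=1$ for every $t\in[0,1]$ and the first two derivatives of $F_t$ in $z$ are bounded uniformly in $t$ on a neighborhood of $z=1$, one gets an estimate of the form
$$
\|1-h_t\|_{C^2(\sfe)}\le C\,\|1-h_K\|_{C^2(\sfe)},\quad\forall\ t\in[0,1],
$$
for a constant $C>0$ independent of $t$ (this is just chain rule bookkeeping: $(h_t)_i$ and $(h_t)_{ij}$ are polynomial expressions in $h_K-1$ and its first two covariant derivatives, all vanishing at $h_K\equiv 1$).

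Next I would invoke the linearity of $Q[\cdot]$: since $Q[h_t]_{ij}=(h_t)_{ij}+h_t\delta_{ij}$ depends continuously on $h_t\in C^2(\sfe)$ with values in $C^0(\sfe;\Sym(n-1))$, and since $Q[1]=\mathrm{I}_{n-1}>0$, by shrinking $\eta$ if necessary I can guarantee $Q[h_t]>0$ on $\sfe$ simultaneously for all $t\in[0,1]$. By Proposition \ref{caratterizzazione funzioni supporto}, this ensures that $h_t$ is the support function of a convex body $K_t$ of class $C^{2,+}$. The strict positivity $h_t>0$ (inherited from $h_K>0$) places the origin in the interior of $K_t$, and evenness of $h_t$ follows immediately from evenness of $h_K$, so $K_t\in\K^n_{0,s}$.

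The only delicate point is the uniformity in $t$ of the $C^2$ estimate, but it is built into the fact that $F_t$ and all its derivatives up to sufficient order are continuous in $(t,z)$ on the compact set $[0,1]\times[1/2,3/2]$; hence no separate argument for boundary values of $t$ is needed. Everything else is a routine application of continuity and of Proposition \ref{caratterizzazione funzioni supporto}.
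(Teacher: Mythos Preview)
Your proof is correct and rests on the same underlying idea as the paper's---that $Q[\cdot]$ is continuous on $C^2(\sfe)$ and $Q[1]=\mathrm{I}_{n-1}>0$---but the execution differs. You give a direct quantitative argument: chain-rule estimates yield $\|1-h_t\|_{C^2(\sfe)}\le C\|1-h_K\|_{C^2(\sfe)}$ uniformly in $t$, and then linearity of $Q$ gives $Q[h_t]>0$ for small $\eta$. The paper instead argues by contradiction and compactness: assuming the conclusion fails along a sequence $\eta_j\to0$, it extracts convergent subsequences of $t_j$ and $x_j$ and passes to the limit to reach $Q[1](\bar x)\le0$, a contradiction. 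Your route is slightly more constructive (it could in principle track the dependence of $\eta$ on $p$), while the paper's avoids any explicit chain-rule bookkeeping; both are equally routine for this statement.
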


\begin{proof} Note that $h_t\in C^2(\sfe)$ for every $t\in[0,1]$. By Proposition \ref{caratterizzazione funzioni supporto}, we need to prove that 
$$
Q[h_t]>0\quad\mbox{on $\sfe$}
$$
for every $t\in[0,1]$. By contradiction, assume that there exist a sequence $\eta_j>0$, $j\in\N$, converging to $0$, a sequence of convex bodies $K_j\in\K^n_{0,s}$, of class $C^{2,+}$, a sequence $t_j\in[0,1]$ and a sequence $x_j\in\sfe$, such that, denoting by $h_j$ the support function of $K_j$,
$$
\|1-h_j\|_{C^2(\sfe)}\le \eta_j,
$$
and
$$
Q[h_{j}(x_j)]\le 0.
$$ 
Clearly $h_j$ converges to the constant function $h_0\equiv1$ in $C^2(\sfe)$, and hence $h_{j}$ converges to $h_0\equiv 1$ in $C^2(\sfe)$. Up to subsequences, we may also assume that $t_j$ and $x_j$ converge to $\bar t\in[0,1]$ and $\bar x\in\sfe$, respectively. As a consequence of these facts, by the continuity of $Q$ we get
$$
Q[h_0(\bar x)]\le 0,
$$
which is a contradiction, as $Q[h_0]$ is the identity matrix.
\end{proof}

\begin{proof}[Proof of Theorem \ref{teo uniqueness}] We first consider the case $p>0$. Let $\bar\eta>0$ be smaller than the two positive quantities, both called $\eta$, appearing in Theorem \ref{princ_uno_nuovo} and Lemma \ref{non si voleva}. Let $K\in\K^n_{0,s}$ be of class $C^{2,+}$ and such that
$$
\|1-h_K\|_{C^2(\sfe)}\le \eta.
$$
Up to replacing $\eta$ with a smaller constant, we may assume that $h_K>0$ on $\sfe$. For simplicity, in the rest of the proof we will write $h$ instead of $h_K$. We also set
$$
K_t=(1-t)\cdot B_n+_p t\cdot K,\quad\forall\, t\in[0,1].
$$
By the definition of $p$ addition and Lemma \ref{non si voleva}, the support function $h_t$ of $K_t$ is given by:
$$
h_t=\left[(1-t)+th^p\right]^{1/p}.
$$
We now consider the functions $f,g\colon[0,1]\to\R$ defined by:
\begin{eqnarray*}
&&f(t)=[V_k(K_t)]^{p/k}=\left[
\frac 1{k\kappa_{n-k}}\int_{\sfe} h_t S_{k-1}(Q[h_t])\d x
\right]^{p/k},\\
&&g(t)=(1-t)[V_k(B_n)]^{p/k}+t[V_k(K)]^{p/k}.
\end{eqnarray*}
By Theorem \ref{princ_uno_nuovo}, 
$$
f(t)\ge g(t), \quad \forall\ t\in[0,1].
$$
Moreover $f(0)=g(0), f(1)=g(1)$, so that
\begin{equation}\label{confronto derivate}
f'(0)\ge g'(0),\quad f'(1)\le g'(1).
\end{equation}
Note that
\begin{equation}\label{derivate g}
g'(0)=g'(1)=[V_k(K)]^{p/k}-[V_k(B_n)]^{p/k}.
\end{equation}
By Lemma \ref{derivatives}, we have:
\begin{eqnarray}\label{catena}
f'(0)&=&\frac pk V_k(B_n)^{\frac{p}{k}-1}\left[\frac1{p\kappa_{n-k}}\int_{\sfe}(h^p-1)S_{k-1}(Q[h_0])\d x\right]\\
&=&[V_k(B_n)]^{\frac{p}{k}-1}\frac1{k\kappa_{n-k}}\int_{\sfe}h^p S_{k-1}(Q[h_0])\d x
-[V_k(B_n)]^{\frac{p}{k}-1}\frac1{k\kappa_{n-k}}\int_{\sfe} S_{k-1}(Q[h_0])\d x\nonumber\\
&=&[V_k(B_n)]^{\frac{p}{k}-1}\frac1{k\kappa_{n-k}}\int_{\sfe}h S_{k-1}(Q[h])\d x-[V_k(B_n)]^{\frac{p}{k}}\nonumber\\
&=&[V_k(B_n)]^{\frac{p}{k}-1}[V_k(K)]-[V_k(B_n)]^{\frac{p}{k}},\nonumber
\end{eqnarray}
where we have used \eqref{eq uniqueness}. From \eqref{confronto derivate}, \eqref{derivate g} and \eqref{catena}, we get
$$
V_k(K)\ge V_k(B_n).
$$
In a similar way, from the comparison $g'(1)\ge f'(1)$ we obtain the reverse inequality. Hence $V_k(K)=V_k(B_n)$, which implies that $f'(0)=f'(1)=0$. As $f$ is concave, we have that $f$ is constant in $[0,1]$, which means that the inequality \eqref{BM-q} becomes an equality for $K$. By Theorem \ref{princ_uno_nuovo}, $K$ is a dilation of $B_n$. On the other hand, \eqref{eq uniqueness} implies $K=B_n$.   

The proof in the case $p=0$ is similar; Theorem \ref{princ due} and Lemma \ref{meno due due} will have to be used instead of Theorem \ref{princ_uno_nuovo} and Lemma \ref{non si voleva}, respectively. 
\end{proof}

\section{Proof of Theorem \ref{BM-fails}: counterexamples}\label{section counterexamples}

In this section we show that for every $k\in\{2,...,n-1\}$ there exists $\bar{p}$ such that the $p$-Brunn-Minkowski inequality for the intrinsic volumes does not hold, for every $p<\bar{p}$, that is, (\ref{non-BM})
is satisfied for suitable $ K_0,K_1\in\mathcal{K}_{0,s}^n $.

Given $k\in\{ 2,\dots,n-1 \}$, we consider 
$$
K_0:=\{ (x_1,\dots,x_n)\in\mathbb{R}^n \, : \, x_j=0 \, , \, \forall \, j=1,\dots,n-k \, \text{ and } \, |x_i|\leq 1 \, , \, \forall \, i=n-k+1,\dots,n \} \, ,
$$
and
$$
K_1:=\{ (x_1,\dots,x_n)\in\mathbb{R}^n \, : \,|x_i|\leq 1 \, , \, \forall \, i=1,\dots,k \, \text{ and } \, x_j=0 \, , \, \forall \, j=k+1,\dots,n \} \, .
$$
$K_0$ and $K_1$ are $k$-dimensional cubes of side length $2$; therefore $V_k(K_0)=V_k(K_1)=2^k$. We set, for $p\in(0,1)$,
$$
K_p:=\dfrac{1}{2}\cdot K_0+_p\dfrac{1}{2}\cdot K_1,
$$
so that its support function $h_{K_p}$ satisfies
\begin{equation}\label{support_controes}
h_{K_p}(x):=h_{K[(\frac{1}{2}h_{K_0}^p+\frac{1}{2}h_{K_1}^p)^{1/p}]}(x)\leq \left(\frac{1}{2}h_{K_0}^p(x)+\frac{1}{2}h_{K_1}^p(x)\right)^{1/p} \quad \text{for all $x\in\mathbb{R}^n$}.
\end{equation}
We denote by $\{e_1,\dots,e_n\}$ the standard orthonormal basis of $\R^n$, and we treat the cases $k>n/2$ and $k\leq n/2$ separately. 

\medskip

\noindent
$\bullet$ \emph{Case $k>n/2$.} In this case we have 
$$
h_{K_0}(\pm e_i)=\begin{cases} 0 &\mbox{if } i\in\{ 1,\dots,n-k \} \\
1 & \mbox{if } i\in\{ n-k+1,\dots,k\} \\
1 & \mbox{if } i\in\{ k+1,\dots,n\}, \end{cases}\quad\mbox{and}
\quad h_{K_1}(\pm e_i)=\begin{cases} 1 &\mbox{if } i\in\{ 1,\dots,n-k \} \\
1 & \mbox{if } i\in\{ n-k+1,\dots,k\} \\
0 & \mbox{if } i\in\{ k+1,\dots,n\}. \end{cases}
$$
Therefore, from \eqref{support_controes},
$$
h_{K_p}(\pm e_i)\leq \begin{cases} 2^{-1/p} &\mbox{if } i\in\{ 1,\dots,n-k \} \\
1 & \mbox{if } i\in\{ n-k+1,\dots,k\} \\
2^{-1/p} & \mbox{if } i\in\{ k+1,\dots,n\}. \end{cases}
$$
We deduce that 
$$
K_p\subseteq K:=\left[-2^{-1/p},2^{-1/p} \right]^{n-k}\times[-1,1]^{2k-n}\times\left[-2^{-1/p},2^{-1/p} \right]^{n-k}\, ,
$$
where $[x_0,y_0]^m$ indicates the $m$-dimensional cube given by the product of $ m $ copies of $[x_0,y_0]$. This implies
\begin{eqnarray*}
V_k(K_p)\leq V_k(K)&=&V_k\left(\left[-2^{-1/p},2^{-1/p} \right]^{2n-2k}\times[-1,1]^{2k-n}\right)\\
&=&V_k\left(\prod_{i=1}^n[-a_i,a_i]\right)=2^k\sum_{1\leq i_1<\ldots<i_k\leq n}a_{i_1}\ldots a_{i_k},
\end{eqnarray*}
where
$$
a_i=\begin{cases} 2^{-1/p} &\mbox{if } i\in\{ 1,\dots,2n-2k \} \\
1 & \mbox{if } i\in\{ 2n-2k+1,\dots,n\}.
\end{cases}
$$
Notice that, since $k<n$, we have $n-(2n-2k)=2k-n<k$, hence when choosing $k$ intervals among $\{[-a_i,a_i]\}_{i=1,...,n}$, at least one of the them is of the form 
$ \left[-2^{-1/p},2^{-1/p} \right]$.  We are going to discuss separately the cases $ k\leq\frac{2}{3}n $ and $ k>\frac{2}{3}n $. 

If $ k\leq\frac{2}{3}n $, then $ 2n-2k\geq k $,
and
$$
V_k(K_p)\leq 2^k\sum_{1\leq i_1<\ldots<i_k\leq n}a_{i_1}\ldots a_{i_k}=2^k\sum_{i=1}^k\binom{2n-2k}{i}2^{-i/p}\leq 2^{k-1/p}\sum_{i=1}^k\binom{2n-2k}{i}=:C_{n,k}\; 2^{k-1/p},
$$
whereas if $ k>\frac{2}{3}n $, then $ 2n-2k<k $ and
\begin{eqnarray*}
V_k(K_p)&\leq& 2^k\sum_{1\leq i_1<\ldots<i_k\leq n}a_{i_1}\ldots a_{i_k}=2^k\sum_{i=1}^{2n-2k}\binom{2n-2k}{i}2^{-i/p}\\
&\leq& 2^{k-1/p}\sum_{i=1}^{2n-2k}\binom{2n-2k}{i}=2^{k-1/p}(2^{2n-2k}-1).
\end{eqnarray*}

Since $V_k(K_0)=V_k(K_1)=2^k$, we have
\begin{equation*}
\left(\frac{1}{2}V_k(K_0)^\frac{p}{k}+\frac{1}{2}V_k(K_1)^\frac{p}{k}\right)^\frac{k}{p}=2^k,
\end{equation*}
while
\begin{equation*}
V_k(K_p)\leq\begin{cases}C_{n,k}\; 2^{k-1/p}&\mbox{if }\frac{n}{2}<k\leq\frac{2}{3}n\\
 2^{k-1/p}(2^{2n-2k}-1)&\mbox{if }\frac{2}{3}n<k\leq n-1\end{cases}
\end{equation*}

If $ \frac{n}{2}<k\leq\frac{2}{3}n $, consider
$$
\bar{p}=\dfrac{1}{\log_2(C_{n,k})},
$$
and let $p<\bar{p}$ (note that $C_{n,k}>1$, so that $\bar p>0$). Hence
$$
2^k > C_{n,k}\; 2^{k-1/p},
$$
that is, the $p$-Brunn-Minkowski inequality fails.

If $ \frac{2}{3}n<k\leq n-1 $, we choose
$$
\bar{p}= \frac{1}{\log_2\left(2^{2n-2k}-1\right)}
$$
and we take $p<\bar{p}$.
Hence
$$
2^k > 2^{k-\frac{1}{p}}(2^{2n-2k}-1),
$$
that is, the $p$-Brunn-Minkowski inequality fails.

\medskip 

$\bullet$ \emph{Case $k\leq n/2$.}  In this case we have 
$$
h_{K_0}(\pm e_i)=\begin{cases} 0 &\mbox{if } i\in\{ 1,\dots,k \} \\
0 & \mbox{if } i\in\{k+1,\dots,n-k\} \\
1 & \mbox{if } i\in\{ n-k+1,\dots,n,\} \end{cases}\quad\mbox{and}\quad
h_{K_1}(\pm e_i)=\begin{cases} 1 &\mbox{if } i\in\{ 1,\dots,k \} \\
0 & \mbox{if } i\in\{ k+1,\dots,n-k\} \\
0 & \mbox{if } i\in\{ n-k+1,\dots,n.\} \end{cases}
$$
Consequently, from \eqref{support_controes},
$$
h_{K_p}(\pm e_i)\leq \begin{cases} 2^{-1/p} &\mbox{if } i\in\{ 1,\dots,k \} \\
0 & \mbox{if } i\in\{ k+1,\dots,n-k\} \\
2^{-1/p} & \mbox{if } i\in\{ n-k+1,\dots,n\} \end{cases}
$$
and we deduce that 
$$
K_p\subseteq K:=\left[-2^{-1/p},2^{-1/p} \right]^k\times\{0\}^{n-2k}\times\left[-2^{-1/p},2^{-1/p} \right]^k\, .
$$
Therefore,
$$
V_k(K_p)\leq V_k\left(\left[-2^{-1/p},2^{-1/p}\right]^{2k}\times\{0\}^{n-2k}\right)=\binom{2k}{k}2^{k-\frac{k}{p}}.
$$
Let $\bar{p}=\frac{k}{\log_2{\binom{2k}{k}}}$ and consider $p<\bar{p}$. We have
	$$
	\binom{2k}{k} < 2^{\frac{k}{p}},
	$$
which is equivalent to $2^k >\binom{2k}{k} 2^{k-\frac{k}{p}}$, which entails that the $p$-Brunn-Minkowski inequality fails.

Summing things up, we have the following result: for every fixed $k\in\{2,...,n-1\}$, let
$$
\bar{p}_k= 
\begin{cases}\displaystyle{\frac{k}{\log_2\binom{2k}{k}}}&\mbox{for }1< k\leq\frac{n}{2},\\
                                 \medskip
                                 \displaystyle{\frac{1}{\log_2\sum_{i=1}^k\binom{2(n-k)}{i}}}&\mbox{for }\frac{n}{2}<k\leq\frac{2}{3}n,\\
                                 \medskip
                                 \displaystyle{\frac{1}{\log_2[2^{2(n-k)}-1]}}&\mbox{for }\frac{2}{3}n<k\leq n-1;
\end{cases}
$$
then the $p$-Brunn Minkowski for intrinsic volumes does not hold if $p<\bar{p}_k$. In particular, this proves Theorem \ref{BM-fails}.

\begin{remark}\label{contiP}
	The value of $\bar{p}_k$ is bounded away from $1$, as $n$ and $k$ range in $\N$ and $\{2,\dots,n-1\}$, respectively. We analyse its value and its asymptotic behaviour in high dimension, in the three cases $1<k\le \frac n2, \frac n2< k\le \frac{2}{3}n\mbox{ and } \frac{2}{3}n < k\le n-1$.
	
	\medskip
	
	\noindent {\bf Case $1<k\le \frac n2$.} The sequence 
	$$
	b_k=\binom{2k}{k} 2^{-k},
	$$
	is strictly increasing, hence $b_k>b_1=1$ for $k\ge 2$, which implies that 
	$$
	\bar{p}_k= \displaystyle{\frac{k}{\log_2\binom{2k}{k}}} <1;
	$$
	moreover $\lim_{k\to\infty} \bar{p}_k = \frac 12$.
	
	\medskip
	
	\noindent {\bf Case $\frac{n}2<k\le \frac{2}{3}n$}. We notice that, since $k>1$,
	\begin{equation}\label{pk3}
	\sum_{i=1}^k\binom{2(n-k)}{i} > \binom{2(n-k)}{1}=2n-2k\ge 2n-\frac 43 n=\frac 23 n,
	\end{equation}
	hence 
	$$
	\bar{p}_k<\frac1{\log_2((2n)/3)}
	$$ 
	for every $k\le \frac{2}{3}n$ and for every $n\ge 3$.
	Hence the asymptotic behavior of $\bar{p}_k$ as $n$ tends to infinity is infinitesimal for every $\frac{n}2<k\le \frac{2}{3}n$.
	
	\medskip
	
	\noindent{\bf Case $\frac{2}{3}n<k\le n-1$.} Since $n-k\ge 1$, we have $\bar{p}_k\le 1/\log_2 3<1$.		
\end{remark}


%



\end{document}